\numberwithin{equation}{section}
\theoremstyle{plain}
\newtheorem{theorem}{Theorem}[section]
\newtheorem{lemma}[theorem]{Lemma}
\newtheorem{proposition}[theorem]{Proposition}
\theoremstyle{definition}
\theoremstyle{remark}
\newtheorem{remark}[theorem]{Remark}
\newtheorem{example}[theorem]{Example}
\numberwithin{equation}{section}
\newcommand{\ep}{\varepsilon}
\def\rd{\mathrm d}
\def\e{\mathrm e}
\def\Om{\Omega}
\def\al{\alpha}
\def\de{\delta}
\def\ep{\epsilon}
\def\ve{\varepsilon}
\def\la{\lambda}
\def\si{\sigma}
\def\om{\omega}
\def\f{\frac}
\def\nb{\nabla}
\def\ov{\overline}
\def\pa{\partial}
\def\wt{\widetilde}
\title[Energy decay of viscoelastic wave problem]
{Energy decay of a viscoelastic wave equation with\\
variable exponent logarithmic nonlinearity\\
and weak damping}
\author[Qingqing Peng]{Qingqing Peng$^{1}$}
\thanks{$^1$School of Mathematics and Statistics \& Hubei Key Laboratory of Engineering Modeling and Scientific Computing, Huazhong University of Science and Technology, Wuhan 430074, China.}
\author[Yikan Liu]{Yikan Liu$^{2,*}$}
\thanks{$^2$ Department of Mathematics, Kyoto University, Kitashirakawa-Oiwakecho, Sakyo-ku, Kyoto 606-8502, Japan.}
\thanks{$^*$Corresponding author.  E-mail: {\tt liu.yikan.8z@kyoto-u.ac.jp}}
\keywords{Viscoelastic wave equation, variable exponent logarithmic nonlinear, weakly damping, exponential and polynomial decay.}
\begin{document}

\begin{abstract}
In this paper, we investigate the energy decay of the solution to a viscoelastic wave equation with variable exponents logarithmic nonlinearity and weak damping in a bounded domain. We establish an explicit general decay result under mild conditions on the relaxation function $g$. Furthermore, under the general assumption $g'(t)\leq-\zeta(t)G(g(t))$ with some suitably given $\zeta$ and $G$, we derive a refined decay estimate improving existing results. In particular, uniform exponential and polynomial decay rates are obtained under a further special situation $g'(t)\leq-\xi(t)g^q(t)$ with $1\leq q<2$, extending earlier studies that were restricted to the case $1\leq q<\frac{3}{2}$.
\end{abstract}

\maketitle


\section{Introduction}

Let $\Omega\subset\mathbb{R}^n$ ($n\geq3$) be a bounded domain with a smooth boundary $\pa\Om$. This article is concerned with the initial-boundary value problem for a nonlinear viscoelastic wave equation
\begin{equation}\label{1.1}
\left\{\begin{alignedat}{2}
& u_{tt}-\triangle u+\int_0^t g(t-s)\triangle u(s)\,\rd s+u_t=\alpha\vert u\vert^{p(x)-2}u\log\vert u\vert,
& \quad & (x,t)\in\Omega\times(0,\infty),\\
& u(x,0)=u_0(x),\ u_t(x,0)=u_1(x), & \quad & x\in\Omega,\\
& u(x,t)=0, & \quad & (x,t)\in\pa\Om\times(0,\infty),
\end{alignedat}\right.
\end{equation}
where $\alpha>0$ is a constant. Here we assume that the exponent $p$ in the nonlinear term satisfies
\begin{equation}\label{1.2}
p_1:=\mathop{\mathrm{ess}\inf}_{x\in\Om}p(x)>2,\quad p_2:=\mathop{\mathrm{ess}\sup}_{x\in\Om}p(x)<\frac{2(n-1)}{n-2}.
\end{equation}
Moreover, $p$ is further assumed to be log-H\"older continuous, i.e., there exist a constant $A>0$ such that for a.e.\! $x,y\in\Om$ satisfying $\vert x-y\vert<1$, there holds
\begin{equation}\label{1.3}
\vert p(x)-p(y)\vert\leq-\frac{A}{\log\vert x-y\vert}.
\end{equation}

Wave equations with logarithmic nonlinearity have attracted significant attention in recent years, and numerous studies have explored the dynamics of problem \eqref{1.1} in the absence of the memory effect (i.e., when $g=0$). The logarithmic wave equation was first introduced by Bialynicki-Birula and Mycielski in \cite{17,18}, who demonstrated the existence of stable and localized solutions in one spatial dimension. Subsequently, Cazenave and Haraux \cite{19} established the well-posedness of the Cauchy problem in three spatial dimensions. Bartkowski and G\'orka \cite{20} studied classical and weak solutions to the one-dimensional Cauchy problem, while G\'orka \cite{21} proved the global existence of weak solutions for initial data $(u_0,u_1)\in H_0^1(\Omega)\times L^2(\Omega) $ using compactness arguments. More recently, Di \cite{4} applied the potential well method to establish global existence for the wave equation with logarithmic nonlinearity and derived exponential or polynomial decay by constructing an appropriate Lyapunov functional. Moreover, blow-up in the unstable set was also established. For further results on stability and blow-up of systems with logarithmic nonlinearities, we refer the reader to \cite{1,14,qing,qing2,PL26} and the references therein.

On the other hand, under certain assumptions on the kernel function $g$ in the memory term, several decay and blow-up results have been established in the literature. In \cite{22}, the authors studied the stability of the initial-boundary value problem for a quasilinear viscoelastic equation. They obtained polynomial decay under mild conditions on $g$, and further investigated both polynomial and exponential decay under a more general condition on $g$. In \cite{12}, energy decay results for the viscoelastic problem were derived by constructing an appropriate Lyapunov functional under suitable assumptions on $g$. Additionally, the authors in \cite{15} considered blow-up phenomena for the viscoelastic wave problem, while those in \cite{2} analyzed the blow-up of solutions for a viscoelastic problem with variable exponents. Later, Liao \cite{23} discussed energy decay rates for solutions to a viscoelastic wave equation with variable exponents and weak damping. Further related works can be found e.g.\! in \cite{25,3,24,uk,q2}.

Inspired by the aforementioned studies, in this article we investigate the stability and blow-up behavior of problem \eqref{1.1}. This manuscript contains three main contributions. First, in proving the energy decay, we neither require the initial energy to be smaller than the depth of the potential well $d$, nor restrict the analysis to a stable set as required in previous works involving logarithmic nonlinearity, which relaxes the key structural assumptions. Second, the conditions imposed on the kernel $g$ are more general than those considered in earlier studies on systems with variable exponents. Third, in Theorem \ref{theorem3.1}, our result holds for $1\leq q<2$, whereas existing literature only covered the case of $1\leq q<\frac{3}{2}$. Indeed, our findings extend and generalize previous results, particularly as those in \cite{25,22,24,qing}.

The organization of this paper is as follows. In Section \ref{sec2}, we establish the global existence of solutions and present several auxiliary lemmas. Then Section \ref{sec3} and \ref{secc3} are devoted to the stability analysis of problem \eqref{1.1}, achieved through the construction of an appropriate Lyapunov functional and the application of multiplier methods.


\section{Preliminaries and some lemmas}\label{sec2}

In this section, we introduce some notations, basic definitions, essential lemmas, and function spaces that will be used in stating and proving our main results.

Throughout, by $\Vert\cdot\Vert_k$ we denote the norm of the Lebesgue space $L^k(\Omega)$ for $1\leq k\le\infty$, and by $(\,\cdot\,,\,\cdot\,)$ the inner product of $L^2(\Omega)$. In order to study problem \eqref{1.1}, we start with recalling the Orlicz-Sobolev-type Banach spaces $L^{q(x)}(\Omega)$ defined as (see \cite{8,9})
$$
L^{p(x)}(\Omega):=\left\{f:\mbox{a measurable real-valued function in }\Om\mid\int_{\Omega}\vert f\vert^p\,\rd x<\infty\right\},
$$
where $p\in L^\infty(\Om)$ satisfies \eqref{1.2}. The norm of $L^{p(x)}(\Omega)$ is given by
$$
\Vert f\Vert_{p(x)}:=\inf\left\{\lambda>0\mid\int_{\Omega}\left\vert\frac{f}{\lambda}\right\vert^p\rd x\leq1\right\},
$$
and it is readily seen that
\begin{equation}\label{2.1}
\min\left\{\Vert f\Vert_{p(x)}^{p_1},\Vert f\Vert_{p(x)}^{p_2}\right\}\leq\int_{\Omega}\vert f\vert^p\,\rd x\leq\max\left\{\Vert f\Vert_{p(x)}^{p_1},\Vert f\Vert_{p(x)}^{p_2}\right\}.
\end{equation}

We collect several basic facts about the function spaces in the following lemmas.

\begin{lemma}[see \cite{10}]\label{lemma3}
Let $k$ be a constant satisfying $2\leq k\leq 2_*:=\frac{2n}{n-2}$ with $n\geq3$. Then there exists an optimal constant depending only on $k$ such that
$$
\Vert v\Vert_k^k\leq B_k\Vert\nabla v\Vert_2^k,\quad\forall\,v\in H_0^1(\Omega).
$$
\end{lemma}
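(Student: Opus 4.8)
The statement to prove is the Sobolev embedding inequality $\|v\|_k^k \le B_k \|\nabla v\|_2^k$ for $v \in H_0^1(\Omega)$ and $2 \le k \le 2_* = \frac{2n}{n-2}$. Note this is cited as coming from \cite{10}, so the "plan" is really to recall the standard route.

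The plan is to proceed in two steps: first the limiting case $k = 2_*$, then interpolation for general $k$. For the endpoint $k = 2_*$, I would invoke the Gagliardo--Nirenberg--Sobolev inequality, which gives $\|v\|_{2_*} \le C_n \|\nabla v\|_{L^1}$-type estimates; more precisely, for $v \in H_0^1(\Omega)$, extending $v$ by zero to all of $\mathbb{R}^n$, one has $\|v\|_{2_*} \le C(n)\|\nabla v\|_2$ with the sharp Talenti constant. Raising to the power $2_*$ yields the claim with $k = 2_*$. For the lower endpoint $k = 2$, I would use the Poincar\'e inequality on the bounded domain $\Omega$, namely $\|v\|_2 \le C_\Omega \|\nabla v\|_2$, valid because $v$ vanishes on $\partial\Omega$; raising to the second power gives $k = 2$.

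For intermediate $2 < k < 2_*$, the key step is log-convexity of $L^p$ norms: writing $\frac{1}{k} = \frac{1-\theta}{2} + \frac{\theta}{2_*}$ for a suitable $\theta \in (0,1)$, H\"older's inequality yields $\|v\|_k \le \|v\|_2^{1-\theta}\|v\|_{2_*}^\theta$. Combining with the two endpoint estimates gives $\|v\|_k \le C_\Omega^{1-\theta} C(n)^\theta \|\nabla v\|_2$, and raising to the $k$-th power produces the asserted inequality with $B_k = \bigl(C_\Omega^{1-\theta} C(n)^\theta\bigr)^k$. The existence of an \emph{optimal} such constant $B_k$ then follows by defining $B_k := \sup\{\|v\|_k^k / \|\nabla v\|_2^k : v \in H_0^1(\Omega),\ v \ne 0\}$, which is finite by the bound just derived and is by construction the best constant; it depends only on $k$ (and $\Omega$, but we suppress that).

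The main obstacle, such as it is, is purely organizational: one must be careful that the embedding constant genuinely depends only on $k$ and not, say, on the particular exponent profile $p(\cdot)$ — which it does not, since this is a fixed-exponent Sobolev estimate independent of the variable-exponent machinery. There is also the minor point that the boundedness of $\Omega$ is essential for the lower endpoint $k = 2$ (Poincar\'e fails on unbounded domains), but this hypothesis is given. Since the paper cites \cite{10}, no genuinely new argument is needed; the above is simply the textbook derivation.
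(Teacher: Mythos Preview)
Your proposal is correct and follows the standard textbook route; note that the paper itself supplies no proof for this lemma but simply cites it from Adams--Fournier \cite{10}, so there is nothing to compare against beyond confirming that your Gagliardo--Nirenberg--Sobolev plus Poincar\'e plus interpolation argument is indeed the classical derivation found there.
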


\begin{lemma}[see \cite{8,9}]
Let $p,q\in C(\ov\Omega)$ satisfy $1<p\le q$ on $\ov\Omega$. Then the embedding $L^{q(x)}(\Omega)\hookrightarrow L^{p(x)}(\Omega)$ is continuous and its operator norm does not exceed $\vert\Omega\vert+1$.
\end{lemma}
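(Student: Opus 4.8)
The plan is to argue directly from the definition of the Luxemburg norm. Fix a nonzero $f\in L^{q(x)}(\Om)$ and let $\lambda>\Vert f\Vert_{q(x)}$ be arbitrary, so that $\int_\Om\vert f/\lambda\vert^{q}\,\rd x\le1$. Set $\mu:=(\vert\Om\vert+1)\lambda$; the claim will be that $\int_\Om\vert f/\mu\vert^{p}\,\rd x\le1$, which immediately gives $\Vert f\Vert_{p(x)}\le\mu=(\vert\Om\vert+1)\lambda$, and then letting $\lambda\downarrow\Vert f\Vert_{q(x)}$ yields the desired bound $\Vert f\Vert_{p(x)}\le(\vert\Om\vert+1)\Vert f\Vert_{q(x)}$. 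In particular, the finiteness of this integral shows $f\in L^{p(x)}(\Om)$, so that the embedding is well defined, and the norm bound gives its continuity with operator norm at most $\vert\Om\vert+1$.

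To verify the claim, split $\Om=\Om_1\cup\Om_2$ with $\Om_1:=\{x\in\Om:\vert f(x)\vert\le\lambda\}$ and $\Om_2:=\{x\in\Om:\vert f(x)\vert>\lambda\}$. On $\Om_1$ one has $\vert f(x)/\mu\vert=\vert f(x)/\lambda\vert/(\vert\Om\vert+1)\le1/(\vert\Om\vert+1)<1$, and since $p(x)>1$ this forces $\vert f(x)/\mu\vert^{p(x)}\le1/(\vert\Om\vert+1)$; integrating over $\Om_1$ contributes at most $\vert\Om_1\vert/(\vert\Om\vert+1)\le\vert\Om\vert/(\vert\Om\vert+1)$. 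On $\Om_2$ one uses both $p(x)\le q(x)$ and $\vert f(x)/\lambda\vert>1$ to get $\vert f(x)/\lambda\vert^{p(x)}\le\vert f(x)/\lambda\vert^{q(x)}$, together with $(\vert\Om\vert+1)^{p(x)}\ge\vert\Om\vert+1$ (again because $p(x)>1$); hence $\vert f(x)/\mu\vert^{p(x)}\le\vert f(x)/\lambda\vert^{q(x)}/(\vert\Om\vert+1)$, and integrating over $\Om_2$ contributes at most $(\vert\Om\vert+1)^{-1}\int_{\Om_2}\vert f/\lambda\vert^{q}\,\rd x\le1/(\vert\Om\vert+1)$. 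Adding the two contributions gives $\int_\Om\vert f/\mu\vert^{p}\,\rd x\le\vert\Om\vert/(\vert\Om\vert+1)+1/(\vert\Om\vert+1)=1$, which proves the claim.

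The argument is essentially elementary; the only points requiring care are that the Luxemburg norm is defined through an infimum — so one works with $\lambda>\Vert f\Vert_{q(x)}$ and passes to the limit rather than substituting the norm itself — and the correct splitting of $\Om$ according to whether $\vert f\vert$ exceeds $\lambda$, which is what allows one to exploit $p\le q$ in the region where $\vert f/\lambda\vert\ge1$ while the finite measure $\vert\Om\vert<\infty$ controls the remaining region. No deeper input beyond \eqref{1.2} (ensuring $p,q>1$) and the boundedness of $\Om$ is needed.
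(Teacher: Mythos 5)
Your argument is correct. Note that the paper itself offers no proof of this lemma: it is quoted directly from the references on variable exponent spaces (Fan--Zhao et al.), so there is nothing internal to compare against; your proof is essentially the standard one from those sources, carried out via the Luxemburg norm. The two points that matter are handled properly: you work with an arbitrary $\lambda>\Vert f\Vert_{q(x)}$ so that the modular inequality $\int_\Omega\vert f/\lambda\vert^{q}\,\rd x\le1$ is legitimately available (the modular is non-increasing in $\lambda$ because the exponents are positive) and only pass to the limit at the end, and the splitting $\{\vert f\vert\le\lambda\}\cup\{\vert f\vert>\lambda\}$ correctly separates the region where $t^{p(x)}\le t$ (needing only $p\ge1$ and the finite measure $\vert\Omega\vert$) from the region where $\vert f/\lambda\vert>1$ allows $p(x)\le q(x)$ to be exploited, with $(\vert\Omega\vert+1)^{p(x)}\ge\vert\Omega\vert+1$ closing the estimate; the two contributions sum exactly to $1$, giving $\Vert f\Vert_{p(x)}\le(\vert\Omega\vert+1)\Vert f\Vert_{q(x)}$. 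One cosmetic remark: to conclude $f\in L^{p(x)}(\Omega)$ in the sense of the paper's definition (finiteness of $\int_\Omega\vert f\vert^{p}\,\rd x$ itself, not of the modular of $f/\mu$), you should add the one-line observation that $\vert f\vert^{p(x)}\le\max\{\mu^{p_1},\mu^{p_2}\}\vert f/\mu\vert^{p(x)}$, which is available since $p\in C(\ov\Omega)$ is bounded; this is routine and does not affect the argument.
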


\begin{lemma}[see \cite{7}]\label{lemma2}
Let $p$ satisfy {\rm\eqref{1.2}--\eqref{1.3}}. Then the embedding $H_0^1(\Omega)\hookrightarrow L^{p(x)}(\Omega)$ is continuous and compact.
\end{lemma}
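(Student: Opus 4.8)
The plan is to derive both assertions from constant-exponent results combined with the classical Rellich--Kondrachov theorem. The essential ingredient is the strict gap $p_2<2_*=\tfrac{2n}{n-2}$, which holds since $\tfrac{2(n-1)}{n-2}<\tfrac{2n}{n-2}$; note also that \eqref{1.3} guarantees $p\in C(\ov\Omega)$, so the comparison lemma stated above is applicable.

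First I would establish continuity. Because $2\le p_2\le 2_*$, Lemma \ref{lemma3} gives $\|v\|_{p_2}\le B_{p_2}^{1/p_2}\|\nabla v\|_2$ for every $v\in H_0^1(\Omega)$. On the other hand, the comparison lemma above, applied with the constant exponent $q\equiv p_2\ge p(\cdot)$ on $\ov\Omega$, furnishes the continuous inclusion $L^{p_2}(\Omega)\hookrightarrow L^{p(x)}(\Omega)$ with operator norm at most $|\Omega|+1$. Composing, $\|v\|_{p(x)}\le(|\Omega|+1)B_{p_2}^{1/p_2}\|\nabla v\|_2$ for all $v\in H_0^1(\Omega)$, which is the claimed continuous embedding.

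Next I would prove compactness. Fix $r$ with $p_2<r<2_*$, possible by the strict inequality above. Let $(v_k)$ be bounded in $H_0^1(\Omega)$; by the compact Sobolev embedding $H_0^1(\Omega)\hookrightarrow L^r(\Omega)$ I may pass to a subsequence converging strongly in $L^r(\Omega)$, say $v_k\to v$, and put $w_k:=v_k-v$, so $\|w_k\|_r\to0$. Since $\Omega$ is bounded and $p_1\le p_2<r$, this forces $\|w_k\|_{p_1}\to0$ and $\|w_k\|_{p_2}\to0$ as well. Using the elementary inequality $t^{p(x)}\le t^{p_1}+t^{p_2}$, valid for all $t\ge0$ because $p_1\le p(x)\le p_2$, I get
$$
\int_\Omega|w_k|^{p(x)}\,\rd x\ \le\ \|w_k\|_{p_1}^{p_1}+\|w_k\|_{p_2}^{p_2}\ \longrightarrow\ 0.
$$
By the left-hand inequality in \eqref{2.1}, $\min\{\|w_k\|_{p(x)}^{p_1},\|w_k\|_{p(x)}^{p_2}\}\to0$; as $\|w_k\|_{p(x)}\ge1$ would make this minimum $\ge1$, necessarily $\|w_k\|_{p(x)}<1$ for $k$ large, whence $\|w_k\|_{p(x)}^{p_2}\to0$ and thus $\|w_k\|_{p(x)}\to0$. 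Hence $v_k\to v$ in $L^{p(x)}(\Omega)$, so the embedding sends bounded sets to relatively compact sets.

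The argument is otherwise routine; the only point I expect to require a little care is the final passage from modular convergence $\int_\Omega|w_k|^{p(x)}\,\rd x\to0$ to norm convergence $\|w_k\|_{p(x)}\to0$, which the sandwich estimate \eqref{2.1} handles cleanly. Alternatively, one could invoke the general equivalence of modular and norm convergence in variable-exponent Lebesgue spaces.
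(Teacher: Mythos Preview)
The paper does not prove this lemma; it merely records it as a known result from the cited reference \cite{7}. Your argument is a correct, self-contained proof: continuity follows by factoring $H_0^1(\Omega)\hookrightarrow L^{p_2}(\Omega)\hookrightarrow L^{p(x)}(\Omega)$ via Lemma~\ref{lemma3} and the comparison lemma, and compactness follows from Rellich--Kondrachov into an intermediate $L^r(\Omega)$ with $p_2<r<2_*$, together with the modular bound $\int_\Omega|w_k|^{p(x)}\,\rd x\le\|w_k\|_{p_1}^{p_1}+\|w_k\|_{p_2}^{p_2}$ and the passage from modular to norm convergence via \eqref{2.1}. All steps are sound.

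One remark: your proof uses \eqref{1.3} only to place $p$ in $C(\ov\Omega)$ so that the comparison lemma as stated applies. In fact the inclusion $L^{p_2}(\Omega)\hookrightarrow L^{p(x)}(\Omega)$ on a bounded domain requires only $p\in L^\infty(\Omega)$ with $p(\cdot)\le p_2$ a.e., so the log-H\"older hypothesis is not genuinely needed for the embedding itself; it enters elsewhere in the paper (density and finer properties of $W^{1,p(x)}$), which is why the authors carry it as a standing assumption and defer the lemma to \cite{7} rather than arguing directly.
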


Next, we fix the basic assumption on the kernel function $g$ in the memory term.\medskip

{\bf(A1)} The function $g\in C^1([0,\infty);(0,\infty))$ is non-increasing and satisfies
\[
g(0)>0,\quad\ell:=1-\int_0^\infty g(s)\,\rd s>0.
\]

In the sequel, we denote
\begin{equation}\label{eq-IM}
I(t):=\int_t^\infty g(s)\,\rd s,\quad K_{\delta}(s):=-\frac{g'(s)}{g(s)}+\delta,\quad M(\delta):=\int_0^\infty\frac{g(s)}{K_{\delta}(s)}\,\rd s,
\end{equation}
where $\delta\in(0,1)$ is a constant. To state our results, we define the energy functional associated with problem \eqref{1.1} as
\begin{align}
E(t) & :=\frac{1}{2}\Vert u_t(t)\Vert_2^2+\frac{1}{2}\left(1-\int_0^t g(s)\,\rd s\right)\Vert\nabla u(t)\Vert_2^2+\frac{1}{2}(g\circ\nabla u)(t)\nonumber\\
& \quad\:\,-\alpha\int_{\Omega}\frac{|u(t)|^p\log|u(t)|}p\,\rd x+\alpha\int_{\Omega}\frac{|u(t)|^p}{p^2}\,\rd x,\label{2.4}
\end{align}
where
\[
(g\circ\nabla u)(t):=\int_0^{t}g(t-s)\Vert\nabla u(t)-\nabla u(s)\Vert_2^2\,\rd s.
\]
By directly differentiating \eqref{2.4} and using \eqref{1.1}, it is straightforward to verify that
\begin{align}
E'(t) & =\frac{1}{2}(g'\circ\nabla u)(t)-\frac{1}{2}g(t)\Vert\nabla u(t)\Vert_2^2-\Vert u_t(t)\Vert_2^{2}\nonumber\\
& \le\frac{1}{2}(g'\circ\nabla u)(t)-\Vert u_t(t)\Vert_2^{2}\le0.\label{2.5}
\end{align}

\begin{lemma}\label{lemma 2.6}
Let assumption {\rm(A1)} hold, $p$ satisfy {\rm\eqref{1.2}--\eqref{1.3}} and $\mu>0$ be a constant satisfying $p_2+\mu<2_*$. Let $B_{p_2+\mu}$ be the optimal constant in Lemma $\ref{lemma3}$ and define
$$
B:=B_{p_2+\mu}\ell^{-\frac{p_2+\mu}{2}},\quad R(\lambda):=\frac{1}{2}\lambda^2-\frac{\alpha B}{\e\mu p_1}\lambda^{p_2+\mu},\quad\lambda(t):=\left\{\ell\Vert\nabla u(t)\Vert_2^2+(g\circ\nabla u)(t)\right\}^{\frac{1}{2}}.
$$
Then there holds $E(t)\geq R(\lambda(t))$.
\end{lemma}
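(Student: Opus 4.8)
The plan is to discard the manifestly non-negative terms in $E(t)$, recognize what remains of the elastic energy as $\tfrac12\lambda(t)^2$, and bound the logarithmic term from above by a constant multiple of $\lambda(t)^{p_2+\mu}$.

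First I would invoke (A1) to get $1-\int_0^t g(s)\,\rd s\ge\ell$, and note that $\tfrac12\Vert u_t(t)\Vert_2^2\ge0$ and (since $\alpha>0$) $\alpha\int_\Omega|u(t)|^p/p^2\,\rd x\ge0$. Dropping these from \eqref{2.4} leaves
\[
E(t)\ge\frac{\ell}{2}\Vert\nabla u(t)\Vert_2^2+\frac12(g\circ\nabla u)(t)-\alpha\int_\Omega\frac{|u(t)|^p\log|u(t)|}{p}\,\rd x,
\]
and by the definition of $\lambda(t)$ the first two terms are exactly $\tfrac12\lambda(t)^2$. Thus it suffices to establish the upper bound $\int_\Omega|u(t)|^p\log|u(t)|/p\,\rd x\le\tfrac{B}{\e\mu p_1}\lambda(t)^{p_2+\mu}$.

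To prove this I would restrict the integral to $\Omega_+(t):=\{x\in\Omega:|u(x,t)|\ge1\}$, on whose complement the integrand is non-positive. On $\Omega_+(t)$ one has $1/p\le1/p_1$ and $\log|u|\ge0$, so I would use the elementary inequality $\log s\le(\e\mu)^{-1}s^{\mu}$, valid for all $s>0$ because $s\mapsto s^{-\mu}\log s$ attains its maximum $(\e\mu)^{-1}$ at $s=\e^{1/\mu}$, to get $|u|^{p}\log|u|\le(\e\mu)^{-1}|u|^{p+\mu}$. Since $|u|\ge1$ and $p+\mu\le p_2+\mu$ on $\Omega_+(t)$, this is in turn $\le(\e\mu)^{-1}|u|^{p_2+\mu}$, whence $\int_\Omega|u(t)|^p\log|u(t)|/p\,\rd x\le\tfrac1{\e\mu p_1}\Vert u(t)\Vert_{p_2+\mu}^{p_2+\mu}$. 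Then, as $2<p_2+\mu<2_*$, Lemma~\ref{lemma3} with $k=p_2+\mu$ gives $\Vert u(t)\Vert_{p_2+\mu}^{p_2+\mu}\le B_{p_2+\mu}\Vert\nabla u(t)\Vert_2^{p_2+\mu}$, while $\lambda(t)^2\ge\ell\Vert\nabla u(t)\Vert_2^2$ (the memory term $(g\circ\nabla u)(t)$ being non-negative) yields $\Vert\nabla u(t)\Vert_2^{p_2+\mu}\le\ell^{-(p_2+\mu)/2}\lambda(t)^{p_2+\mu}$. Combining everything with $B=B_{p_2+\mu}\ell^{-(p_2+\mu)/2}$ gives $E(t)\ge\tfrac12\lambda(t)^2-\tfrac{\alpha B}{\e\mu p_1}\lambda(t)^{p_2+\mu}=R(\lambda(t))$.

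The argument is short; the only point demanding care is the bookkeeping with the variable exponent, namely that one must first pass to the set $\{|u|\ge1\}$ — where the logarithm has a sign and $|u|^{p+\mu}$ can be dominated by the constant power $|u|^{p_2+\mu}$ — before applying the fixed-exponent Sobolev inequality of Lemma~\ref{lemma3}. I do not expect any genuine obstacle beyond this.
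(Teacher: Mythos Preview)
Your proof is correct and follows essentially the same route as the paper: split off the set $\{|u|\ge1\}$, use $\log s\le(\e\mu)^{-1}s^{\mu}$ together with $|u|^{p}\le|u|^{p_2}$ there to reach $\tfrac{1}{\e\mu p_1}\Vert u\Vert_{p_2+\mu}^{p_2+\mu}$, apply Lemma~\ref{lemma3}, and then pass from $\Vert\nabla u\Vert_2$ to $\lambda(t)$ via $\ell\Vert\nabla u\Vert_2^2\le\lambda(t)^2$. The only cosmetic difference is the order of presentation---the paper first isolates the estimate \eqref{r3} for a generic $v\in H_0^1(\Omega)$ and then inserts it into $E(t)$, whereas you reduce $E(t)$ first and bound the logarithmic term afterwards.
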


\begin{proof}
Splitting the domain $\Om$ into
\begin{equation}\label{eq-split}
\Omega_1:=\{v\in H_{0}^1(\Omega)\mid|v|<1\},\quad\Omega_2:=\{v\in H_{0}^1(\Omega)\mid|v|\geq1\},
\end{equation}
we apply the Sobolev embedding theorem and Lemma \ref{lemma3} with $k=p_2+\mu$ to deduce
\begin{align}
\int_{\Omega}\frac{\vert v\vert^p\log\vert v\vert}p\,\rd x
& =\left(\int_{\Omega_1}+\int_{\Omega_2}\right)\frac{\vert v\vert^p\log\vert v\vert}p\,\rd x
\leq\frac{1}{\e\mu p_1}\int_{\Omega_2}|v|^{p_2}|v|^{\mu}\,\rd x\nonumber\\
& \leq\frac{1}{\e\mu p_1}\int_{\Omega}|v|^{p_2+\mu}\,\rd x
\leq\frac{B_{p_2+\mu}}{\e\mu p_1}\Vert\nabla v\Vert_2^{p_2+\mu}.\label{r3}
\end{align}
Here we used the inequality $x^{-\mu}\log x<(\e\mu)^{-1}$ for $x\geq1$, where $\mu>0$ is the constant stated in the theorem. Then according to the definition \eqref{2.4} of $E(t)$, we substitute $v=u(t)$ in \eqref{r3} to estimate
\begin{align*}
E(t) &\geq\frac{\ell}{2}\Vert\nabla u(t)\Vert_2^2+\frac{1}{2}(g\circ\nabla u)(t)-\frac{\alpha B_{p_2+\mu}}{\e\mu p_1}\Vert\nabla u(t)\Vert_2^{p_2+\mu}+\frac{\alpha}{p_2^2}\int_{\Omega}|u(t)|^p\,\rd x\\
&\geq\frac{1}{2}\left\{\ell\Vert\nabla u(t)\Vert_2^2+(g\circ\nabla u)(t)\right\}-\frac{\alpha B}{\e\mu p_1}\left\{\ell\Vert\nabla u(t)\Vert_2^2+(g\circ\nabla u)(t)\right\}^{\frac{p_2+\mu}{2}}\\
& =\frac{1}{2}\lambda(t)^2-\frac{\alpha B}{\e\mu p_1}\lambda(t)^{p_2+\mu}=R(\lambda(t)),
\end{align*}
by the definitions of $B$, $\la(t)$ and $R(\la)$.
\end{proof}

It is easily verified that $R(\lambda)$ attains its maximum at
$$
\lambda_1:=\left(\frac{\e\mu p_1}{\alpha(p_2+\mu)B}\right)^{\frac{1}{p_2+\mu-2}}>0
$$
with the corresponding maximum
\[
E_1:=R(\lambda_1)=\left(\frac{1}{2}-\frac{1}{p_2+\mu}\right)\lambda_1^2>0.
\]

\begin{lemma}[see {\cite[Lemma 3.4]{23}}]\label{lemma 2.7}
Let assumption {\rm(A1)} hold and $p$ satisfies {\rm\eqref{1.2}--\eqref{1.3}}. Let $u$ be a solution to problem \eqref{1.1} whose initial data satisfies
\[
E(0)<E_1,\quad\lambda(0)<\lambda_1.
\]
Then there exists a constant $\lambda_2\in(0,\lambda_1)$ such that
\[
\lambda(t)=\left\{\ell\Vert\nabla u(t)\Vert_2^2+(g\circ\nabla u)(t)\right\}^{\frac{1}{2}}\leq\lambda_2,\quad\forall\,t\in[0,T_{\max}),
\]
where $T_{\max}$ is the maximal existence time.
\end{lemma}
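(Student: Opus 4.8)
The plan is to use a standard continuity (bootstrap) argument based on the structure of $R(\lambda)$ near its maximum. First I would recall that by Lemma~\ref{lemma 2.6} we have $E(t)\geq R(\lambda(t))$ for all $t\in[0,T_{\max})$, and by \eqref{2.5} the energy is non-increasing, so $R(\lambda(t))\leq E(t)\leq E(0)<E_1=R(\lambda_1)$. Since $R$ is strictly increasing on $[0,\lambda_1)$ and strictly decreasing on $(\lambda_1,\infty)$ with $R(0)=0$, the sublevel set $\{\lambda\geq0\mid R(\lambda)\leq E(0)\}$ is the union of two disjoint intervals $[0,\lambda_2]$ and $[\lambda_3,\infty)$ for some $\lambda_2\in(0,\lambda_1)$ and $\lambda_3\in(\lambda_1,\infty)$ (here $\lambda_2$ is characterized as the smaller root of $R(\lambda)=E(0)$). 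Because $\lambda(0)<\lambda_1$ and $R(\lambda(0))\leq E(0)$, we must have $\lambda(0)\in[0,\lambda_2]$.

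The key step is to upgrade this from $t=0$ to all $t\in[0,T_{\max})$. I would argue by contradiction: suppose there exists $t^*\in(0,T_{\max})$ with $\lambda(t^*)>\lambda_2$. Since $t\mapsto\lambda(t)$ is continuous (which follows from the regularity of the solution $u$ guaranteed by the global existence result of Section~\ref{sec2}, so that $t\mapsto\|\nabla u(t)\|_2$ and $t\mapsto(g\circ\nabla u)(t)$ are continuous), and $\lambda(0)\leq\lambda_2<\lambda(t^*)$, there is a first time $\bar t\in(0,t^*]$ at which $\lambda(\bar t)=\lambda_2$, or else $\lambda$ crosses the gap; more carefully, by continuity $\lambda$ takes every value between $\lambda_2$ and $\lambda(t^*)$, in particular some value in $(\lambda_2,\lambda_1)$, say at time $\bar t$. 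But then $\lambda(\bar t)\in(\lambda_2,\lambda_1)$ forces $R(\lambda(\bar t))>R(\lambda_2)=E(0)$ by strict monotonicity of $R$ on $[0,\lambda_1]$, contradicting $R(\lambda(\bar t))\leq E(\bar t)\leq E(0)$. Hence $\lambda(t)\leq\lambda_2$ for all $t\in[0,T_{\max})$.

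The main obstacle here is essentially bookkeeping rather than a deep difficulty: one must ensure that $\lambda(t)$ genuinely is continuous in $t$ and cannot "jump" over the forbidden interval $(\lambda_2,\lambda_3)$, which is where the precise function-space regularity of the solution enters — the energy identity \eqref{2.5} is stated for sufficiently regular solutions, and one should confirm that the global solutions constructed in Section~\ref{sec2} have enough regularity for $E(t)$ and $\lambda(t)$ to be continuous and for \eqref{2.5} to hold. Given that, the rest is a direct application of the shape of $R$. Since this lemma is quoted verbatim from \cite[Lemma 3.4]{23}, I would either reproduce this short argument or simply cite the reference; in a self-contained presentation the contradiction argument above is the cleanest route.
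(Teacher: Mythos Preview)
Your argument is correct and is exactly the standard potential-well continuity argument one expects here. The paper itself does not supply a proof of this lemma at all; it is stated with the attribution ``see \cite[Lemma 3.4]{23}'' and no further justification, so there is nothing to compare your approach against beyond noting that what you wrote is precisely the proof one finds in the cited reference (and in essentially every paper using this machinery). Your identification of $\lambda_2$ as the unique root of $R(\lambda)=E(0)$ in $(0,\lambda_1)$, together with the continuity of $t\mapsto\lambda(t)$ and the monotonicity of $R$ on $[0,\lambda_1]$, is the whole story.
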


Next, we invoke the existence result for the solution to \eqref{1.1}.

\begin{proposition}[Local existence]\label{theorem1}
Let $(u_0,u_1)\in H_0^1(\Omega)\times L^2(\Omega) $ be given. Let assumption {\rm(A1)} hold and $p$ satisfy {\rm\eqref{1.2}--\eqref{1.3}}. Then there exists $T>0$ such that problem \eqref{1.1} admits a unique local weak solution $u$ on $[0,T]$.
\end{proposition}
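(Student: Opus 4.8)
The plan is to realize the solution as the fixed point of a map obtained by solving a linearized (frozen-source) problem with the Faedo--Galerkin method, which is the classical scheme for semilinear viscoelastic equations. Fix $R>0$ and $T>0$ to be chosen, and introduce
\[
\mathcal Y_T:=\Big\{w\in C([0,T];H_0^1(\Om))\cap C^1([0,T];L^2(\Om)):w(0)=u_0,\ w_t(0)=u_1,\ \|w\|_{\mathcal Y_T}\le R\Big\},
\]
with $\|w\|_{\mathcal Y_T}^2:=\sup_{t\in[0,T]}\big(\|w_t(t)\|_2^2+\|\nabla w(t)\|_2^2\big)$, which is a complete metric space under the distance induced by $C([0,T];H_0^1)\cap C^1([0,T];L^2)$ once $R\ge R_0(\|u_0\|_{H_0^1},\|u_1\|_2)$. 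Given $w\in\mathcal Y_T$, let $\Phi(w):=u$ be the weak solution of the linear problem
\[
u_{tt}-\triangle u+\int_0^tg(t-s)\triangle u(s)\,\rd s+u_t=f_w,\qquad u(0)=u_0,\ u_t(0)=u_1,\ u|_{\pa\Om}=0,
\]
where the source is frozen to $f_w:=\al|w|^{p(x)-2}w\log|w|$; a fixed point of $\Phi$ is exactly a local weak solution of \eqref{1.1}.

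The central analytic input is that $w\mapsto f_w$ maps bounded sets of $H_0^1(\Om)$ into bounded sets of $L^2(\Om)$ and is locally Lipschitz. Splitting $\Om$ into $\{|w|<1\}$ and $\{|w|\ge1\}$ as in \eqref{eq-split} and using the elementary bounds $x^{-\mu}\log x\le(\e\mu)^{-1}$ for $x\ge1$ and $-\log x\le(\e\mu)^{-1}x^{-\mu}$ for $0<x<1$, one obtains the pointwise estimate $|f_w|\le C\big(1+|w|^{p_2-1+\mu}\big)$ for a small $\mu>0$; choosing $\mu$ so that $2(p_2-1+\mu)\le 2_*$—which is possible precisely because $p_2<\frac{2(n-1)}{n-2}$ in \eqref{1.2}—and invoking the Sobolev embedding together with \eqref{2.1} yields $\|f_w(t)\|_2\le C(R)$ for $w\in\mathcal Y_T$. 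For the Lipschitz bound, note that $s\mapsto|s|^{p(x)-2}s\log|s|$ is $C^1$ on $\BR$ with derivative vanishing at $s=0$ (because $p_1>2$), hence by the mean value theorem and the same splitting $|f_{w_1}-f_{w_2}|\le C|w_1-w_2|\big(1+|w_1|^{p_2-2+\mu}+|w_2|^{p_2-2+\mu}\big)$; a Hölder inequality with exponents $\frac n{n-2}$ and $\frac n2$ followed by the embeddings $H_0^1\hookrightarrow L^{2_*}$ and $H_0^1\hookrightarrow L^{n(p_2-2+\mu)}$ (again legitimate for $\mu$ small by \eqref{1.2}) gives $\|f_{w_1}(t)-f_{w_2}(t)\|_2\le C(R)\|\nabla(w_1-w_2)(t)\|_2$.

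With these estimates the rest is routine. For fixed $w$ the linear problem is solved by Galerkin approximation in the eigenbasis of $-\triangle$ on $H_0^1(\Om)$: testing the finite-dimensional system with $u^m_t$ and using the standard convolution identity, valid under {\rm(A1)},
\[
\int_0^tg(t-s)\big(\nabla u(s),\nabla u_t(t)\big)\,\rd s=-\f12\f{\rd}{\rd t}\Big[(g\circ\nabla u)(t)-\Big(\!\int_0^tg(s)\,\rd s\Big)\|\nabla u(t)\|_2^2\Big]+\f12(g'\circ\nabla u)(t)-\f12g(t)\|\nabla u(t)\|_2^2,
\]
together with Young's inequality and Gronwall's lemma, produces bounds for $u^m$ in $C([0,T];H_0^1)\cap C^1([0,T];L^2)$ that are uniform in $m$; since every term depending on $u$ is linear, weak-$*$ compactness alone suffices to pass to the limit, yielding the unique $u=\Phi(w)$. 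The same energy estimate gives $\|\Phi(w)\|_{\mathcal Y_T}^2\le C_0(\|u_0\|_{H_0^1}^2+\|u_1\|_2^2)+C_0T\,C(R)^2$, so after fixing $R$ large and then $T$ small, $\Phi$ maps $\mathcal Y_T$ into itself. Applying the same estimate to $v:=\Phi(w_1)-\Phi(w_2)$, which solves the linear problem with zero data and source $f_{w_1}-f_{w_2}$, and using the Lipschitz bound above yields $\|\Phi(w_1)-\Phi(w_2)\|_{\mathcal Y_T}^2\le C_0C(R)^2T\,\|w_1-w_2\|_{\mathcal Y_T}^2$, so $\Phi$ is a contraction for $T$ small enough. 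Banach's fixed point theorem then provides the unique $u\in\mathcal Y_T$ with $\Phi(u)=u$, and uniqueness in the full solution class follows by running the same energy-and-Gronwall argument on the difference of two solutions. I expect the only genuine difficulty to lie in the nonlinearity estimates of the second paragraph—controlling the variable-exponent logarithmic term near $|w|=1$ and exploiting the subcriticality in \eqref{1.2} (with \eqref{1.3} entering through the embedding in Lemma \ref{lemma2})—while the viscoelastic memory is absorbed by the now-standard convolution identity and causes no essential trouble.
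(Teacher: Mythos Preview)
Your proposal is correct and follows essentially the same approach that the paper indicates: the paper does not give a detailed proof of this proposition but simply writes that ``combining the Faedo--Galerkin method with the proof for logarithmic nonlinearity in \cite{4} and that for nonlinear damping and a memory term in \cite{2}, we can show the local existence.'' Your outline---solving the linearized problem by Galerkin approximation, controlling the variable-exponent logarithmic source via the splitting \eqref{eq-split} and the subcriticality condition \eqref{1.2}, and closing by a contraction on $\mathcal Y_T$---is precisely a way to flesh out that sketch, and the nonlinearity estimates you identify as the heart of the matter are indeed the only nonstandard ingredient.
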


Combining the Faedo-Galerkin method with the proof for logarithmic nonlinearity in \cite{4} and that for nonlinear damping and a memory term in \cite{2}, we can show the local existence stated in Proposition \ref{theorem1}.

We introduce an auxiliary energy
\begin{equation}\label{r4}
\mathbb{E}(t):=E(t)+\alpha\int_{\Omega}\frac{|u(t)|^p\log|u(t)|}p\,\rd x.
\end{equation}
We prepare additional estimates for $E(t)$ and $\mathbb E(t)$.

\begin{lemma}\label{lemma2.8}
Under the same assumptions of Lemma $\ref{lemma 2.7},$ there hold
\begin{gather}
\alpha\int_{\Omega}\frac{|u(t)|^p\log|u(t)|}p\,\rd x\leq\wt{C}E(t)\leq\wt{C}E(0),\label{r1}\\
\mathbb{E}(t)\leq(1+\wt{C})E(t)\leq(1+\wt{C})E(0)\label{r2}
\end{gather}
for any $t\in [0,T_{\max}),$ where
$$
\wt{C}:=\frac{\frac{2\alpha\lambda_2^{p_2+\mu-2}B}{\e\mu p_1}}{1-\frac{2\alpha\lambda_2^{p_2+\mu-2}B}{\e\mu p_1}}.
$$
\end{lemma}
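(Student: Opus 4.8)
The plan is to set $F(t):=\alpha\int_\Omega\frac{|u(t)|^p\log|u(t)|}p\,\rd x$, so that $\mathbb E(t)=E(t)+F(t)$ by \eqref{r4} and proving \eqref{r1} reduces to the single estimate $F(t)\le\wt C E(t)$. Once that is in hand, \eqref{r2} is immediate, and both chains are completed by the energy inequality \eqref{2.5}, which gives $E(t)\le E(0)$ for all $t\in[0,T_{\max})$. Thus the whole lemma rests on controlling $F(t)$ by $E(t)$.

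To obtain this, I would first reuse the computation \eqref{r3} from the proof of Lemma~\ref{lemma 2.6}: taking $v=u(t)$ there gives $F(t)\le\frac{\alpha B_{p_2+\mu}}{\e\mu p_1}\Vert\nabla u(t)\Vert_2^{p_2+\mu}$. Since $1-\int_0^t g(s)\,\rd s\ge\ell$, one has $\ell\Vert\nabla u(t)\Vert_2^2\le\lambda(t)^2$, and recalling $B=B_{p_2+\mu}\ell^{-\frac{p_2+\mu}2}$ this upgrades to $F(t)\le\frac{\alpha B}{\e\mu p_1}\lambda(t)^{p_2+\mu}$. Now I invoke Lemma~\ref{lemma 2.7}, which under the present hypotheses yields $\lambda(t)\le\lambda_2$; writing $\lambda(t)^{p_2+\mu}=\lambda(t)^{p_2+\mu-2}\lambda(t)^2\le\lambda_2^{p_2+\mu-2}\lambda(t)^2$ gives
\[
F(t)\le c\,\lambda(t)^2,\qquad c:=\frac{\alpha B\lambda_2^{p_2+\mu-2}}{\e\mu p_1}.
\]

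To close the loop I would bound $E(t)$ from below by $\lambda(t)^2$. Discarding the non-negative terms $\frac12\Vert u_t(t)\Vert_2^2$ and $\alpha\int_\Omega\frac{|u(t)|^p}{p^2}\,\rd x$ in \eqref{2.4} and again using $1-\int_0^t g(s)\,\rd s\ge\ell$, one gets $E(t)\ge\frac12\lambda(t)^2-F(t)$. Combining this with $\lambda(t)^2\ge F(t)/c$ from the previous step yields $E(t)\ge\bigl(\frac1{2c}-1\bigr)F(t)$, that is, $F(t)\le\frac{2c}{1-2c}E(t)=\wt C E(t)$, which is exactly \eqref{r1}; then $\mathbb E(t)=E(t)+F(t)\le(1+\wt C)E(t)$ gives \eqref{r2}, and the bounds by $E(0)$ follow from \eqref{2.5}.

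The only genuinely delicate point is that the argument requires $2c<1$, i.e.\ the denominator of $\wt C$ to be positive. This is where the smallness assumptions $E(0)<E_1$ and $\lambda(0)<\lambda_1$ enter: they furnish, via Lemma~\ref{lemma 2.7}, a constant $\lambda_2<\lambda_1$, and since $\lambda_1^{p_2+\mu-2}=\frac{\e\mu p_1}{\alpha(p_2+\mu)B}$ one computes $c<\frac1{p_2+\mu}<\frac12$ using $p_2>2$. Hence $\wt C\in(0,\infty)$, and the remainder is just bookkeeping of constants.
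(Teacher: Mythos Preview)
Your proof is correct and follows essentially the same route as the paper: both use \eqref{r3} to bound the logarithmic term by $c\,\ell\Vert\nabla u(t)\Vert_2^2$ (equivalently $c\,\lambda(t)^2$) with $c=\frac{\alpha B\lambda_2^{p_2+\mu-2}}{\e\mu p_1}$, then use $\ell\Vert\nabla u(t)\Vert_2^2\le 2(E(t)+F(t))$ and rearrange. Your explicit verification that $2c<1$ via $\lambda_2<\lambda_1$ and $p_2+\mu>2$ is a nice addition that the paper leaves implicit.
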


\begin{proof}
Since the second halves of \eqref{r1} and \eqref{r2} follow immediately from the monotonicity of $E(t)$, it suffices to verify the respective first halves.

We employ \eqref{r3} and Lemma \ref{lemma 2.7} to deduce
\begin{align*}
\alpha\int_{\Omega}\frac{|u(t)|^p\log|u(t)|}p\,\rd x &
\leq\frac{\alpha B_{p_2+\mu}}{\e\mu p_1}\Vert\nabla u(t)\Vert_2^{p_2+\mu}
\leq\frac{\alpha B}{\e\mu p_1}\left(\ell^{\frac{1}{2}}\Vert\nabla u(t)\Vert_2\right)^{p_2+\mu-2}\ell\Vert\nabla u(t)\Vert_2^2\\
& \leq\frac{2\alpha\lambda_2^{p_2+\mu-2}B}{\e\mu p_1}\left(E(t)+\alpha\int_{\Omega}\frac{|u(t)|^p\log|u(t)|}p\,\rd x\right),
\end{align*}
which implies the first half of \eqref{r1} by a simple rearrangement. Then the first half of \eqref{r2} is a direct consequence of \eqref{r1} and \eqref{r4}.
\end{proof}

\begin{remark}
Clearly, \eqref{r2} indicates that $\mathbb{E}(t)$ is uniformly bounded for all $t\in [0,T_{\max})$, implying the global existence of the solution, i.e., $T_{\max}=\infty$. At the same time, we also have $0\leq E(t)\leq E(0)$ for all $t\in[0,\infty)$.
\end{remark}


\section{Energy decay results}\label{sec3}

This section is devoted to the statements of our main results regarding the energy decay rates of global solutions to problem \eqref{1.1}. The first result is as follows.

\begin{theorem}\label{theorem3.2}
Under the same assumptions of Lemma $\ref{lemma 2.7},$ further let $\alpha$ be sufficiently small. then there exists a constant $C>0$ such that the energy $E(t)$ of the solution to \eqref{1.1} satisfies the following polynomial decay estimates
\begin{equation}\label{r3.1}
\int_0^{\infty}E(t)\,\rd t\leq C E(0),\quad
E(t)\leq C E(0)(1+t)^{-1},\quad t\geq0.
\end{equation}
\end{theorem}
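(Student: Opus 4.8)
The plan is to avoid the heavier Lyapunov machinery (no second, memory-based multiplier is needed for this result): the weak damping $u_t$ already supplies enough dissipation, and the smallness of $\alpha$ is what lets one absorb every quantity that is not controlled by $E(0)$ alone. I would first record the elementary consequences of \eqref{2.5}. Integrating $E'(t)\le-\|u_t(t)\|_2^2$ gives $\int_0^\infty\|u_t(t)\|_2^2\,\rd t\le E(0)$, while the Remark after Lemma \ref{lemma2.8} gives $0\le E(t)\le E(0)$ with $E$ non-increasing. Discarding the nonnegative terms in \eqref{2.4} and using \eqref{r1} yields the pointwise bounds $\|\nabla u(t)\|_2^2\le CE(t)$ and $\|u_t(t)\|_2^2\le CE(t)$, so in particular $\|\nabla u(t)\|_2\le\lambda_2/\sqrt{\ell}$ for all $t\ge0$ by Lemma \ref{lemma 2.7}. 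Finally, splitting the domain into $\{|u|<1\}$ and $\{|u|\ge1\}$ as in \eqref{eq-split} and using $x^{-\mu}\log x\le(\e\mu)^{-1}$ for $x\ge1$, $-\log x\le C_\sigma x^{-\sigma}$ for $0<x<1$, Lemma \ref{lemma3}, and the boundedness of $\|\nabla u\|_2$ — all exponents being kept below $2_*$ thanks to \eqref{1.2} — one obtains $\alpha\int_\Omega|u|^p\log|u|\,\rd x\le C\alpha E(t)$, $\alpha\int_\Omega|u|^p/p^2\,\rd x\le C\alpha E(t)$, and the (sign-indefinite) term $-\alpha\int_\Omega|u|^p\log|u|/p\,\rd x$ appearing in \eqref{2.4} is $\le C\alpha\|\nabla u(t)\|_2^2$.

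The main step is a single multiplier identity with $\Phi(t):=\int_\Omega u(t)u_t(t)\,\rd x$. Differentiating and substituting \eqref{1.1}, then integrating by parts in the memory term, one gets
\[
\Phi'(t)=\|u_t(t)\|_2^2-\|\nabla u(t)\|_2^2+\int_0^t g(t-s)(\nabla u(t),\nabla u(s))\,\rd s-\int_\Omega uu_t\,\rd x+\alpha\int_\Omega|u|^p\log|u|\,\rd x.
\]
Estimating the convolution by Cauchy--Schwarz and Young so that its $\|\nabla u(t)\|_2^2$-part is absorbed into $-\|\nabla u(t)\|_2^2$ (leaving $-\tfrac{1+\ell}2\|\nabla u(t)\|_2^2+\tfrac12\int_0^t g(t-s)\|\nabla u(s)\|_2^2\,\rd s$), integrating over $[0,T]$, applying Fubini ($\int_0^T\!\int_0^t g(t-s)\|\nabla u(s)\|_2^2\,\rd s\,\rd t\le(1-\ell)\int_0^T\|\nabla u(s)\|_2^2\,\rd s$) and using $\int_0^T\bigl(-\int_\Omega uu_t\,\rd x\bigr)\rd t=\tfrac12(\|u_0\|_2^2-\|u(T)\|_2^2)\le\tfrac12\|u_0\|_2^2$, the two $\|\nabla u\|_2^2$ contributions combine to $-\ell\int_0^T\|\nabla u\|_2^2\,\rd t$ and one is left with
\[
\ell\int_0^T\|\nabla u(t)\|_2^2\,\rd t\le\int_0^T\|u_t(t)\|_2^2\,\rd t+|\Phi(0)|+|\Phi(T)|+\tfrac12\|u_0\|_2^2+\alpha\int_0^T\!\!\int_\Omega|u|^p\log|u|\,\rd x\,\rd t.
\]
By the first paragraph (and $|\Phi(t)|\le CE(t)$) the first four terms are $\le CE(0)$ and the last is $\le C\alpha\int_0^T E(t)\,\rd t$, hence $\int_0^T\|\nabla u(t)\|_2^2\,\rd t\le CE(0)+C\alpha\int_0^T E(t)\,\rd t$.

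To close the argument I would use the upper bound on the energy coming from \eqref{2.4}: discarding the logarithmic term as above, $E(t)\le C_3\bigl(\|u_t(t)\|_2^2+\|\nabla u(t)\|_2^2\bigr)+\tfrac12(g\circ\nabla u)(t)$ with $C_3$ bounded (close to $\tfrac12$ when $\alpha$ is small), while $\int_0^T(g\circ\nabla u)(t)\,\rd t\le4(1-\ell)\int_0^T\|\nabla u(t)\|_2^2\,\rd t$ by Fubini together with $\|\nabla u(t)-\nabla u(s)\|_2^2\le2\|\nabla u(t)\|_2^2+2\|\nabla u(s)\|_2^2$. Integrating this bound over $[0,T]$ and inserting $\int_0^T\|u_t\|_2^2\,\rd t\le E(0)$ and the estimate for $\int_0^T\|\nabla u\|_2^2\,\rd t$ just obtained gives $\int_0^T E(t)\,\rd t\le CE(0)+C\alpha\int_0^T E(t)\,\rd t$. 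Taking $\alpha$ so small that $C\alpha\le\tfrac12$ and letting $T\to\infty$ yields $\int_0^\infty E(t)\,\rd t\le CE(0)$, which is the first estimate in \eqref{r3.1}. The second follows at once: since $E$ is non-increasing, $tE(t)\le\int_0^t E(s)\,\rd s\le CE(0)$ for every $t>0$, and combining with $E(t)\le E(0)$ on $[0,1]$ gives $E(t)\le CE(0)(1+t)^{-1}$.

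The only genuinely delicate point is the bookkeeping needed to ensure that every quantity not already bounded by a multiple of $E(0)$ is multiplied by $\alpha$. This forces one (i) to estimate the logarithmic nonlinearity on $\{|u|<1\}$ and $\{|u|\ge1\}$ separately, both in $\Phi'(t)$ and in the upper bound for $E(t)$, using $p_2<\frac{2(n-1)}{n-2}$ to keep the resulting powers of $|u|$ Sobolev-admissible and the boundedness of $\|\nabla u\|_2$ to trade them for $\|\nabla u\|_2^2$; and (ii) to absorb the memory convolution into $-\ell\int_0^T\|\nabla u\|_2^2\,\rd t$ rather than leaving an $\int_0^T(g\circ\nabla u)(t)\,\rd t$ with a coefficient that does not scale with $\alpha$ — which is exactly why no decay hypothesis on $g$ beyond {\rm(A1)} enters here. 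Everything else reduces to routine integrations by parts and applications of Young's inequality (formally justified, as usual, at the level of Galerkin approximations).
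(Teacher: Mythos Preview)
Your argument is correct and takes a genuinely different---and considerably lighter---route than the paper's.

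The paper builds a pointwise Lyapunov functional. It introduces four auxiliary functions $I_1,\dots,I_4$ (with $I_3=\Phi$ your multiplier and $I_4$ the memory-weighted multiplier $-\int_\Omega u_t\int_0^t g(t-s)(u(t)-u(s))\,\rd s\,\rd x$), combines them into $J(t)=N_1E(t)+I_3(t)+N_2I_4(t)+\frac{\ell}{32}I_1(t)+2N_4I_2(t)$, and proves a \emph{differential} inequality $J'(t)\le-\varepsilon E(t)$. To close this pointwise estimate one must control the cross term $\int_\Omega\bigl|\int_0^t g(t-s)(\nabla u(t)-\nabla u(s))\,\rd s\bigr|^2\rd x$ at each fixed $t$, which is where the machinery $K_\delta$, $M(\delta)$ and the limit $\delta M(\delta)\to0$ from \cite{22} enter. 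You bypass all of this: by integrating $\Phi'(t)$ over $[0,T]$ \emph{first} and then applying Fubini to $\int_0^T\!\int_0^t g(t-s)\|\nabla u(s)\|_2^2\,\rd s\,\rd t\le(1-\ell)\int_0^T\|\nabla u\|_2^2\,\rd t$ (and likewise for $\int_0^T(g\circ\nabla u)\,\rd t$), the memory term is absorbed directly into the $-\ell\int_0^T\|\nabla u\|_2^2\,\rd t$ budget without any auxiliary functional. This works precisely because the target is the \emph{integrated} estimate $\int_0^\infty E\,\rd t\le CE(0)$ rather than a differential inequality, so you never need a pointwise bound on $(g\circ\nabla u)(t)$ in terms of dissipative quantities. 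What the paper's heavier approach buys is reusability: the intermediate estimates \eqref{4.5}--\eqref{4.7} for $L'(t)$ are invoked again, essentially verbatim, in the proofs of Theorems~\ref{the3.2} and~\ref{theorem3.1}, where a pointwise inequality is genuinely needed to couple with the decay hypothesis on $g$. For Theorem~\ref{theorem3.2} in isolation, however, your argument is both shorter and more transparent.
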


Next, suppose that $g(t)$ satisfies the following additional assumption.\medskip

{\bf(A2)} Let $G\in C^1([0,\infty);[0,\infty))$ be either linear or strictly increasing, strictly convex and of $C^2$-class on $[0,r]$ ($r\le g(0)$) satisfying $G(0)=G'(0)=0$, and $\zeta\in C^1([0,\infty);(0,\infty))$ be non-increasing. The function $g(t)$ satisfies the ordinary differential inequality
$$
g'(t)\leq-\zeta(t)G(g(t)),\quad\forall\,t>0.
$$

\begin{remark}[see \cite{11}]
If $G$ is a strictly increasing and strictly convex $C^2$ function on $[0,r]$ satisfying $G(0)=G'(0)=0$, then it admits an extension $\ov{G}$ sharing the same monotonicity, convexity and regularity on $[0,\infty)$. For instance, if $G(r)=a$, $G'(r)=b$ and $G''(r)=c$, then we can construct $\ov{G}$ for $t>r$ as 
\[
\ov{G}(t)=\frac{C}{2}t^2+(b-C r)t+\left(a+\frac{C}{2}r^2-b r\right).
\]
Meanwhile, in this case the convexity of $G$ and $G(0)=0$ yield
\[
G(\theta t)\leq\theta G(t),\quad\forall\,\theta\in[0,1],\ \forall\,t\in(0,r].
\]
\end{remark}

With assumption (A2), we have the following result.

\begin{theorem}\label{the3.2}
Under the same assumptions of Theorem $\ref{theorem3.2},$ further let assumption {\rm(A2)} hold. Then there exist constants $k_1,k_2,t_1>0$ such that the energy of problem \eqref{1.1} satisfies
\begin{equation}\label{rr3.44}
E(t)\leq k_2G_1^{-1}\left(k_1\int_{t_1}^t\zeta(s)\,\rd s\right)
\end{equation}
for all $t\geq t_1,$ where $G_1(t):=\int_t^r\frac{1}{s\,G'(s)}\,\rd s$ is strictly decreasing and convex on $[0,r]$ and satisfies $\lim_{t\to0}G_1(t)=\infty$.
\end{theorem}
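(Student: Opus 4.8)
The plan is to construct a Lyapunov functional that is equivalent to the energy $E(t)$ and whose derivative is controlled, and then to feed the resulting differential inequality into a standard convexity argument based on Jensen's inequality and the function $G$ from assumption (A2). Concretely, building on the polynomial decay already established in Theorem \ref{theorem3.2}, in particular the integrability $\int_0^\infty E(t)\,\rd t\leq CE(0)$ and the decay $E(t)\leq CE(0)(1+t)^{-1}$, I would first introduce the standard perturbed energy
\[
L(t):=NE(t)+\ve_1\Psi(t)+\ve_2\chi(t),
\]
where $\Psi(t):=\int_\Om u u_t\,\rd x$ and $\chi(t):=-\int_\Om u_t\int_0^t g(t-s)(u(t)-u(s))\,\rd s\,\rd x$ are the usual multiplier terms, with $N$ large and $\ve_1,\ve_2$ small so that $L(t)\sim E(t)$. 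Differentiating and using \eqref{2.5}, the equation \eqref{1.1}, Lemma \ref{lemma3}, Lemma \ref{lemma 2.7} (to keep $\la(t)$ bounded) and the smallness of $\alpha$, I would obtain an estimate of the shape
\[
L'(t)\leq-c_0 E(t)+c_1(g\circ\nb u)(t)
\]
for $t\geq t_0$ and suitable positive constants, where the memory term $(g\circ\nb u)(t)$ is not directly absorbed because $g'$ need not dominate $g$ pointwise.

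The next step, which is the technical heart of the argument, is to handle the term $(g\circ\nb u)(t)$ using (A2). I would split $(g\circ\nb u)(t)$ according to whether $G$ is linear or strictly convex. In the linear case $G(s)=cs$, assumption (A2) gives $g'(t)\leq-c\,\zeta(t)g(t)$, so $\zeta(t)(g\circ\nb u)(t)\leq-c^{-1}(g'\circ\nb u)(t)\leq-2c^{-1}E'(t)$, and multiplying the differential inequality by $\zeta(t)$ yields $(\zeta L+ c_2 E)'(t)\leq -c_0\zeta(t)E(t)$, from which Gronwall-type integration gives $E(t)\leq k_2\exp(-k_1\int_{t_1}^t\zeta(s)\,\rd s)$; one then checks that in the linear case $G_1^{-1}$ is precisely an exponential, so \eqref{rr3.44} holds. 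In the strictly convex case, I would fix a parameter and, for a small $\theta>0$, use Jensen's inequality on the probability-like measure $g(t-s)\,\rd s$ (after normalising) together with the extension $\ov G$ from the Remark, to bound $\int_0^t g(t-s)\Vert\nb u(t)-\nb u(s)\Vert_2^2\,\rd s$ in terms of $G^{-1}$ of $-\frac{1}{\zeta(t)}(g'\circ\nb u)(t)$; this is the classical Lasiecka–Tataru / Messaoudi-type computation. One then introduces the auxiliary functional $F(t):=\zeta(t)\big(L(t)+c_2 E(t)\big)$ or a variant, derives $F'(t)\leq -c_3\zeta(t)E(t)+c_4\zeta(t)(-E'(t))^{\beta}$-type bounds, and closes the loop by composing with $G$ and integrating, obtaining $E(t)\leq k_2 G_1^{-1}\big(k_1\int_{t_1}^t\zeta(s)\,\rd s\big)$ with $G_1(t)=\int_t^r\frac{\rd s}{sG'(s)}$.

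I would then verify the stated properties of $G_1$: strict positivity of the integrand on $(0,r]$ (using $G'>0$ there by strict monotonicity and convexity with $G'(0)=0$) shows $G_1$ is strictly decreasing; differentiating twice and using convexity of $G$ (so $G'$ is increasing, hence $sG'(s)$ is increasing) shows $G_1$ is convex; and $G'(s)\to 0$ as $s\to0$ forces the integrand to blow up fast enough that $\lim_{t\to0}G_1(t)=\infty$, so $G_1:(0,r]\to[0,\infty)$ is a bijection and $G_1^{-1}$ is well-defined on $[0,\infty)$ with $G_1^{-1}(\tau)\to0$ as $\tau\to\infty$. The constants $t_1,k_1,k_2$ arise from absorbing the fixed-time contributions on $[0,t_1]$ and from the equivalence $L\sim E$.

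The main obstacle I anticipate is the convex case: making the Jensen-inequality step rigorous requires carefully normalising the memory kernel (the total mass $\int_0^t g(s)\,\rd s$ is time-dependent and bounded away from $1$ only through $\ell$), controlling the boundary effects where $G$ must be replaced by its extension $\ov G$, and arranging the various small parameters ($\theta$, $\ve_1$, $\ve_2$, and the smallness of $\alpha$) so that every error term generated by the logarithmic nonlinearity $\alpha|u|^{p(x)-2}u\log|u|$ — estimated via \eqref{r3} and the uniform bound on $\la(t)$ from Lemma \ref{lemma 2.7} — is genuinely absorbed into $-c_0E(t)$. The interplay between the variable exponent $p(x)$ (handled through \eqref{2.1} and the splitting $\Om=\Om_1\cup\Om_2$) and the convexity machinery is the delicate bookkeeping that the proof must get right.
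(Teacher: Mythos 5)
Your proposal follows essentially the same route as the paper: the multiplier-based Lyapunov functional from Theorem \ref{theorem3.2} giving $L'(t)\leq-\ve E(t)+C(g\circ\nb u)(t)$, a Jensen-type convexity bound of $(g\circ\nb u)(t)$ by $\ov G^{-1}\bigl(c(-g'\circ\nb u)(t)/\zeta(t)\bigr)$ (which the paper simply imports as Lemma \ref{lem3.6} from \cite{q2}), and then a weighted auxiliary functional with an ODE comparison that integrates to the $G_1^{-1}$ decay. The only difference is in the endgame's bookkeeping: the paper weights $L$ by $\ov G'\bigl(\ve_1E(t)/E(0)\bigr)$ and invokes the generalized Young inequality with the conjugate $\ov G^*$ before multiplying by $\zeta(t)$, rather than your $\zeta(t)\bigl(L(t)+c_2E(t)\bigr)$ variant with a power-type bound, but this is the same standard machinery and leads to the same differential inequality $F_1'(t)\leq-k\zeta(t)\,G_2(E(t)/E(0))$ that yields \eqref{rr3.44}.
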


For better understanding, we provide an example to illustrate the result above.

\begin{example}
Let $g(t)=a\exp(-t^p)$, where $0<p<1$ and $a>0$ is sufficiently small so that $g$ satisfies assumption (A1). Then assumption (A2) is fulfilled with
$$
\zeta(t)=1,\quad G(t)=\frac{p^t}{(\ln\f a t)^{\f1p-1}}.
$$
Then direct calculation yields
$$
G'(t)=\frac{(1-p)+p\ln\f a t}{(\ln\f a t)^{\f1p}},\quad G''(t)=\frac{(1-p)(\ln\f a t+\f1p)}{(\ln\f a t)^{\frac{1}{p+1}}}.
$$
Thus, $G$ satisfies assumption (A2) on $[0,r]$ for any $0<r<a$. Further, we calculate
\begin{align*}
G_1(t) & =\int_t^r\frac{1}{s\,G'(s)}\,\rd s
=\int_t^r\frac{(\ln\frac{a}{s})^{\frac{1}{p}}}{s(1-p+p\ln\frac{a}{s})}\,\rd s
=\int_{\ln\frac{a}{r}}^{\ln\frac{a}{t}}\frac{\si^{\frac{1}{p}}}{1-p+p\si}\,\rd\si\\
& =\frac{1}{p}\int_{\ln\frac{a}{r}}^{\ln\frac{a}{t}}{\si^{\frac{1}{p}-1}}\left(\frac{\si}{\frac{1-p}{p}+\si}\right)\rd\si
\leq\frac{1}{p}\int_{\ln\frac{a}{r}}^{\ln\frac{a}{t}}{\si^{\frac{1}{p}-1}}\,\rd\si
\leq\left(\ln\frac{a}{t}\right)^{\frac{1}{p}}.
\end{align*}
Consequently, \eqref{rr3.44} implies $E(t)\leq k\exp(-k t^p)$.
\end{example}

Finally, we add an alternative assumption on $g(t)$ as follows.\medskip

{\bf(A3)} Let $\xi\in C^1([0,\infty);(0,\infty))$ be non-increasing such that $\int_0^\infty\xi(s)\,\rd s=\infty$, and $1\leq q<2$. The function $g(t)$ satisfies the ordinary differential inequality
\[
g'(t)\leq-\xi(t)g(t)^q,\quad\forall\,t>0.
\]

Replacing assumption (A2) with (A3), we obtain the following result.

\begin{theorem}\label{theorem3.1}
Under the same assumptions of Theorem $\ref{theorem3.2},$ further let assumption {\rm(A3)} hold. Then there exist constants $K>0$ and $K'>0$, such that the energy of problem \eqref{1.1} satisfies
\begin{equation}\label{3.2}
E(t)\leq\left\{\begin{alignedat}{2}
& E(0)\exp\left(1-K\int_0^t\xi(s)\,\rd s\right), &\quad & q=1,\\
& E(0)\left(\f q{1+K'(q-1)\int_0^t\xi(s)\,\rd s}\right)^{\frac{1}{q-1}}, & \quad & 1<q<2.
\end{alignedat}\right.
\end{equation}
\end{theorem}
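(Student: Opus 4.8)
The plan is to obtain Theorem~\ref{theorem3.1} as a direct specialization of the general decay estimate \eqref{rr3.44} of Theorem~\ref{the3.2}, applied with the concrete choice $G(s)=s^q$ and $\zeta=\xi$. First I would check that under this choice assumption (A3) is a particular instance of (A2): for $q=1$ the function $G(s)=s$ is linear, while for $1<q<2$ it is strictly increasing and strictly convex on $[0,r]$ with $G(0)=G'(0)=0$, the only delicate point being that $G''(s)=q(q-1)s^{q-2}$ is not continuous at $s=0$. This is harmless, since the argument leading to \eqref{rr3.44} only evaluates $G$ and $G'$ along the trajectory $t\mapsto g(t)$, so one may work with a $C^2$-regularization of $G$ near the origin in the spirit of the remark following (A2); alternatively one simply re-runs the Lyapunov computation of Theorem~\ref{the3.2} directly with $G(s)=s^q$, for which only $G'(s)=qs^{q-1}$ and $G''(s)=q(q-1)s^{q-2}>0$ on $(0,r]$ are needed. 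Note also that $\int_0^\infty\xi(s)\,\rd s=\infty$ and the non-increase of $\xi$ enter only through the requirement $\lim_{t\to0}G_1(t)=\infty$, which here holds because $\int_0^r\frac{\rd s}{s\,G'(s)}=\frac1q\int_0^r s^{-q}\,\rd s=\infty$ for every $q\ge1$.

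Next I would compute $G_1$ and its inverse explicitly. Since $G'(s)=qs^{q-1}$,
\[
G_1(t)=\int_t^r\frac{\rd s}{s\,G'(s)}=\frac1q\int_t^r s^{-q}\,\rd s
=\begin{cases}\ln\dfrac{r}{t},& q=1,\\[2mm]\dfrac{t^{1-q}-r^{1-q}}{q(q-1)},& 1<q<2,\end{cases}
\]
so that $G_1^{-1}(y)=r\,\e^{-y}$ when $q=1$ and $G_1^{-1}(y)=\bigl(q(q-1)\,y+r^{1-q}\bigr)^{-\frac1{q-1}}$ when $1<q<2$. Substituting into \eqref{rr3.44} yields, for all $t\ge t_1$,
\[
E(t)\le k_2\,r\exp\!\Bigl(-k_1\!\int_{t_1}^t\xi(s)\,\rd s\Bigr)\ \ (q=1),\qquad
E(t)\le k_2\,r\Bigl(1+r^{q-1}q(q-1)\,k_1\!\int_{t_1}^t\xi(s)\,\rd s\Bigr)^{-\frac1{q-1}}\ \ (1<q<2).
\]

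It then remains to bring these into the precise form \eqref{3.2}, which is elementary constant bookkeeping. One first replaces the lower limit $t_1$ by $0$ by writing $\int_{t_1}^t\xi=\int_0^t\xi-\int_0^{t_1}\xi$ and using $\int_0^\infty\xi=\infty$ to guarantee $\int_{t_1}^t\xi\ge\frac12\int_0^t\xi$ once $t$ is large. One then absorbs the factor $k_2\,r$ into a prefactor, which after using $E(t_1)\le E(0)$ and the monotonicity of $E$ can be taken comparable to $E(0)$; this is exactly where the slack constants $\e$ (in the case $q=1$) and $q^{1/(q-1)}$ (in the case $1<q<2$) in the statement are spent, and it also disposes of the remaining bounded range of $t$ via $E(t)\le E(0)$ together with $\e^{1-K\int_0^t\xi}\ge1$ for $K\int_0^t\xi\le1$ and $\bigl(q/(1+K'(q-1)\int_0^t\xi)\bigr)^{1/(q-1)}\ge1$ for $K'\int_0^t\xi\le q-1$. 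Choosing $K$ and $K'$ small enough then gives \eqref{3.2}.

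I expect the real difficulty to lie not in this last step but in the result it invokes, namely in arranging the proof of Theorem~\ref{the3.2} (equivalently, the direct Lyapunov estimate under (A3)) so that the full range $1\le q<2$ is reached. The earlier restriction to $1\le q<\frac32$ stems from bounding the memory term $(g\circ\nabla u)$ through a condition of the type $\int_0^\infty g(s)^{2-q}\,\rd s<\infty$; the point here is to avoid it by splitting $g(s)=\frac{-g'(s)}{K_\delta(s)}+\delta\frac{g(s)}{K_\delta(s)}$ as suggested by the notation \eqref{eq-IM}, so that the memory term is controlled by $-\frac1\delta(g'\circ\nabla u)$ — dissipation, up to a harmless multiplier — plus a contribution of order $\delta\,M(\delta)$ times the energy, and then by optimizing the free parameter $\delta=\delta(t)\downarrow0$ along the trajectory. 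Verifying that $M(\delta)$ stays under control (in particular finite, with the right behaviour as $\delta\downarrow0$) under the sole hypothesis (A3) is the crux of the whole scheme; once this is in place, the specialization to $G(s)=s^q$ described above delivers Theorem~\ref{theorem3.1}.
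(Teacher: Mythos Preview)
Your approach --- specializing Theorem~\ref{the3.2} with $G(s)=s^q$ and $\zeta=\xi$ --- is valid and does yield \eqref{3.2} after the constant bookkeeping you sketch; the $C^2$-at-zero issue for $G(s)=s^q$ is indeed cosmetic, since the proof of Theorem~\ref{the3.2} only uses $G'>0$ and $G''\ge0$ on $(0,r]$. But the paper does \emph{not} proceed this way. It gives an independent argument: multiply the equation by $\xi(t)E(t)^{q-1}u(t)$, integrate over $\Omega\times(\tau,T)$, rewrite as $2\int_\tau^T\xi E^q\,\rd t=\sum_{i=1}^7 J_i$, estimate each $J_i$, and feed the resulting inequality $\int_\tau^\infty\xi E^q\,\rd t\le\omega^{-1}E(0)^{q-1}E(\tau)$ into Martinez's Lemma~\ref{lemma3.1}. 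The key step for $1<q<2$ is the memory term $J_4$: the paper applies H\"older with exponents $q$ and $q^*$ to get $(g\circ\nabla u)(t)\le C\bigl(\int_0^t\|\nabla u(s)-\nabla u(t)\|_2^2\,\rd s\bigr)^{1-1/q}(g^q\circ\nabla u)(t)^{1/q}$ and then bounds the first factor uniformly using $\int_0^\infty E\,\rd t\le CE(0)$ from Theorem~\ref{theorem3.2}. This is what lifts the range from $q<\tfrac32$ to $q<2$ --- not the $K_\delta$ splitting you mention in your last paragraph, which belongs to the proof of Theorem~\ref{theorem3.2} itself, not to this step.

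Your route is shorter since it reuses Theorem~\ref{the3.2} wholesale; the paper's route is logically independent of Theorem~\ref{the3.2}, relies instead on the integrability result of Theorem~\ref{theorem3.2} plus Martinez's lemma, and produces $K,K'$ more explicitly. One small slip in your write-up: $\int_0^\infty\xi=\infty$ has nothing to do with $\lim_{t\to0}G_1(t)=\infty$ (the latter is a property of $G$ alone); rather, it is what makes the bound \eqref{rr3.44} an actual decay statement and what you need for the $t_1\to0$ replacement in your final step.
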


Theorem \ref{theorem3.1} generalizes the results in \cite{25,24,qing}, where $g'(t)$ was assumed to satisfy condition (A3) with $\le p<\frac{3}{2}$, while our result holds for $1\leq q<2$. Moreover, the decay rates
\[
E(t)\leq\left\{\begin{alignedat}{2}
& K\exp\left(1-\lambda\int_{t_0}^t\xi(s)\,\rd s\right), & \quad & p=1,\\
& K\left(\frac{1}{1+\int_{t_0}^t\xi^{2p-1}(s)\,\rd s}\right)^{\frac{1}{2p-2}}, & \quad & 1<p<\frac{3}{2}
\end{alignedat}\right.
\]
obtained in \cite{24} are improved under this broader framework.


\section{Proof of energy decay results}\label{secc3}

Throughout this section, by $C>0$ we denote generic constants with may change line to line. 


\subsection{Proof of Theorem \ref{theorem3.2}}

To proceed, we shall first introduce several auxiliary functions and prepare several lemmas accordingly.

Recall the functions $I(t)$ and $M(\de)$ defined in \eqref{eq-IM} and introduce
$$
I_1(t):=\int_{\Omega}\int_0^t I(t-s)\vert\nabla u(s)\vert^2\,\rd s\rd x,\quad I_2(t):=M(\delta)(\delta I_1(t)+E(t)),\quad\de>0.
$$

\begin{lemma}[see {\cite[Lemma 2.1]{22}}]\label{le3.1}
For $t\geq0$, the functions $I_1(t),I_2(t)$ defined above satisfy
\begin{equation}\label{r3.3}
\begin{aligned}
\frac{\rd}{\rd t}I_1(t) & \leq-\frac{1}{2}(g\circ\nb u)(t)+2\|\nabla u(t)\|_2^2,\\
\frac{\rd}{\rd t}I_2(t) & \leq-\frac{M(\delta)}{2}\int_{\Omega}\int_0^t K_{\delta}(t-s)g(t-s)\vert\nabla u(t)-\nabla u(s)\vert^2\,\rd s\rd x+2\delta M(\delta)\|\nabla u(t)\|_2^2.
\end{aligned}
\end{equation}
Moreover, we have
\begin{equation}\label{rr3.5}
\delta M(\delta)\longrightarrow 0\quad\mbox{as }\delta\to0.
\end{equation}
\end{lemma}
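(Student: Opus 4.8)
The plan is to prove the two differential inequalities in \eqref{r3.3} by direct differentiation, and to obtain the limit \eqref{rr3.5} by dominated convergence. For the bound on $\frac{\rd}{\rd t}I_1(t)$, note that $I(t)=\int_t^\infty g(s)\,\rd s$ satisfies $I(0)=\int_0^\infty g(s)\,\rd s=1-\ell$ and $I'(t)=-g(t)$, so differentiating under the integral sign (Leibniz rule) gives
\[
\frac{\rd}{\rd t}I_1(t)=(1-\ell)\|\nabla u(t)\|_2^2-\int_\Omega\int_0^t g(t-s)|\nabla u(s)|^2\,\rd s\rd x .
\]
To bring out the term $-\tfrac12(g\circ\nabla u)(t)$ I would insert the elementary pointwise bound $|\nabla u(s)|^2\geq\tfrac12|\nabla u(t)-\nabla u(s)|^2-|\nabla u(t)|^2$ (a rearrangement of $|a-b|^2\leq 2|a|^2+2|b|^2$) into the last integral, and estimate $\int_0^t g(t-s)\,\rd s\leq\int_0^\infty g(s)\,\rd s=1-\ell<1$. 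Collecting terms and using $1-\ell<1$ once more then yields $\frac{\rd}{\rd t}I_1(t)\leq-\tfrac12(g\circ\nabla u)(t)+2\|\nabla u(t)\|_2^2$.

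For $I_2(t)=M(\delta)(\delta I_1(t)+E(t))$, I would differentiate and plug in the bound on $I_1'$ just obtained together with the energy identity \eqref{2.5}, namely $E'(t)\leq\tfrac12(g'\circ\nabla u)(t)$, to get
\[
\frac{\rd}{\rd t}I_2(t)\leq M(\delta)\Big[\,2\delta\|\nabla u(t)\|_2^2-\tfrac{\delta}{2}(g\circ\nabla u)(t)+\tfrac12(g'\circ\nabla u)(t)\Big].
\]
The decisive observation is that the very definition $K_\delta(s)=-g'(s)/g(s)+\delta$ gives $K_\delta(s)g(s)=-g'(s)+\delta g(s)$, so that
\[
\tfrac12(g'\circ\nabla u)(t)-\tfrac{\delta}{2}(g\circ\nabla u)(t)=-\tfrac12\int_\Omega\int_0^t K_\delta(t-s)g(t-s)|\nabla u(t)-\nabla u(s)|^2\,\rd s\rd x ,
\]
which is precisely the negative term appearing in the second line of \eqref{r3.3}; combining the two displays finishes this part.

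Finally, for \eqref{rr3.5} I would write $\delta M(\delta)=\int_0^\infty\frac{\delta g(s)}{K_\delta(s)}\,\rd s$ and observe that, since $g$ is positive and non-increasing, $-g'(s)/g(s)\geq0$ and hence $K_\delta(s)\geq\delta$, so the integrand is dominated by $g(s)\in L^1(0,\infty)$ independently of $\delta$, while for a.e.\ $s$ it converges to $0$ as $\delta\to0$; the dominated convergence theorem then gives $\delta M(\delta)\to0$. I expect the computations to be essentially routine; the only steps needing genuine care are the sign bookkeeping for the memory terms in the identity for $\tfrac12(g'\circ\nabla u)-\tfrac{\delta}{2}(g\circ\nabla u)$, and verifying the hypotheses of dominated convergence in the last step — in particular, that the dominating function is $\delta$-independent and that the pointwise limit indeed holds almost everywhere.
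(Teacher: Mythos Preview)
The paper does not prove this lemma at all; it simply cites \cite[Lemma~2.1]{22}. Your argument is exactly the standard one behind that reference: differentiate $I_1$ via Leibniz' rule using $I(0)=1-\ell$ and $I'=-g$, extract $(g\circ\nabla u)$ through the elementary inequality $|b|^2\ge\tfrac12|a-b|^2-|a|^2$, and then combine $\delta I_1'+E'$ with the algebraic identity $K_\delta(s)g(s)=-g'(s)+\delta g(s)$ to obtain the bound on $I_2'$. All of this is correct and there is nothing to compare against in the present paper.

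One genuine point of care you already flag deserves an explicit remark: the dominated convergence step for \eqref{rr3.5} rests on $\dfrac{\delta g(s)}{K_\delta(s)}\to 0$ for a.e.\ $s$, which holds precisely where $g'(s)<0$. Under assumption (A1) alone, $g$ could be constant on a set of positive measure, in which case the integrand equals $g(s)$ there and does \emph{not} tend to $0$; your argument would then yield $\delta M(\delta)\to\int_{\{g'=0\}}g(s)\,\rd s>0$. In the settings where the lemma is actually applied in this paper (Theorems~\ref{the3.2} and~\ref{theorem3.1}), assumptions (A2) or (A3) force $g'(t)<0$ for all $t>0$, so the issue disappears; for Theorem~\ref{theorem3.2} one needs either this as a tacit hypothesis (as in \cite{22}) or that $\{g'=0\}$ be a null set. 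You should state this explicitly rather than leave it as a point ``needing care''.
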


Next, we further introduce
$$
I_3(t):=\int_{\Omega}u_t(t)u(t)\,\rd x,\quad I_4(t):=-\int_{\Omega}u_t(t)\int_0^t g(t-s)(u(t)-u(s))\,\rd s\rd x.
$$
The following two lemmas give estimates for the derivatives of $I_3(t)$ and $I_4(t)$.

\begin{lemma}\label{le3.2}
Let $(u_0,u_1)\in H_0^1(\Omega)\times L^2(\Omega)$ be given. Under the same assumptions of Lemma $\ref{lemma 2.7},$ the function $I_3(t)$ defined above satisfies
\begin{align}
\frac{\rd}{\rd t}I_3(t) & \leq-\frac{\ell}{2}\Vert\nabla u(t)\Vert_2^2+\left(1+\f{B_2}\ell\right)\Vert u_t(t)\Vert_2^2+\frac{1}{\ell}\int_{\Omega}\left\vert\int_0^t g(t-s)(\nabla u(s)-\nabla u(t))\,\rd s\right\vert^2\rd x\nonumber\\
& \quad\,+\alpha\int_{\Omega}|u(t)|^p\log|u(t)|\,\rd x.\label{rr3.3}
\end{align}
\end{lemma}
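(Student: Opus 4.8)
The plan is to differentiate $I_3(t)=\int_\Omega u_t u\,\rd x$ directly and substitute the PDE \eqref{1.1}. Carrying out the product rule gives
$$
\frac{\rd}{\rd t}I_3(t)=\Vert u_t(t)\Vert_2^2+\int_\Omega u_{tt}(t)u(t)\,\rd x,
$$
and then replacing $u_{tt}=\triangle u-\int_0^t g(t-s)\triangle u(s)\,\rd s-u_t+\alpha|u|^{p(x)-2}u\log|u|$ and integrating by parts in $x$ on the two terms involving $\triangle$ (the boundary terms vanish since $u\in H_0^1(\Omega)$) produces
$$
\frac{\rd}{\rd t}I_3(t)=\Vert u_t(t)\Vert_2^2-\Vert\nabla u(t)\Vert_2^2+\int_\Omega\nabla u(t)\cdot\int_0^t g(t-s)\nabla u(s)\,\rd s\,\rd x-\int_\Omega u_t(t)u(t)\,\rd x+\alpha\int_\Omega|u(t)|^p\log|u(t)|\,\rd x.
$$
The logarithmic term already appears in the desired form, so the work is to estimate the three remaining cross terms.

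For the memory cross term I would first write $\nabla u(s)=\nabla u(t)-(\nabla u(t)-\nabla u(s))$, so that
$$
\int_\Omega\nabla u(t)\cdot\int_0^t g(t-s)\nabla u(s)\,\rd s\,\rd x=\left(\int_0^t g(s)\,\rd s\right)\Vert\nabla u(t)\Vert_2^2-\int_\Omega\nabla u(t)\cdot\int_0^t g(t-s)(\nabla u(t)-\nabla u(s))\,\rd s\,\rd x.
$$
Combining the first piece with $-\Vert\nabla u(t)\Vert_2^2$ yields $-(1-\int_0^t g(s)\,\rd s)\Vert\nabla u(t)\Vert_2^2\le-\ell\Vert\nabla u(t)\Vert_2^2$ by assumption (A1). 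The remaining piece is handled by Young's inequality with a parameter tuned to $\ell$: for any $\eta>0$,
$$
\left|\int_\Omega\nabla u(t)\cdot\int_0^t g(t-s)(\nabla u(t)-\nabla u(s))\,\rd s\,\rd x\right|\le\eta\Vert\nabla u(t)\Vert_2^2+\frac{1}{4\eta}\int_\Omega\left|\int_0^t g(t-s)(\nabla u(s)-\nabla u(t))\,\rd s\right|^2\rd x,
$$
and choosing $\eta=\ell/2$ gives the coefficient $\tfrac1\ell$ on the last integral and absorbs $\ell\Vert\nabla u\Vert_2^2$ down to $-\tfrac{\ell}{2}\Vert\nabla u(t)\Vert_2^2$. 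The damping cross term $-\int_\Omega u_t u\,\rd x$ is bounded by Young's inequality and then Poincaré (Lemma \ref{lemma3} with $k=2$, constant $B_2$): $\left|\int_\Omega u_t u\,\rd x\right|\le\tfrac{\ell}{2\cdot?}\dots$ — more precisely I would split it as $\le\Vert u_t\Vert_2^2+\tfrac14\Vert u\Vert_2^2$ won't match, so instead use $\left|\int_\Omega u_t u\right|\le\Vert u_t\Vert_2\Vert u\Vert_2\le\Vert u_t\Vert_2^2+\tfrac{1}{4}\Vert u\Vert_2^2$ and $\Vert u\Vert_2^2\le B_2\Vert\nabla u\Vert_2^2$; but that feeds $\Vert\nabla u\Vert_2^2$ back with a positive sign. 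Hence the $\ell/2$ budget above should actually be split so a small fraction of $\Vert\nabla u\Vert_2^2$ is reserved to absorb $\tfrac{B_2}{4}\Vert\nabla u\Vert_2^2$ — but the stated lemma keeps $-\tfrac\ell2\Vert\nabla u\Vert_2^2$ and puts $\tfrac{B_2}\ell$ into the $u_t$ coefficient, so the correct route is $\left|\int_\Omega u_t u\right|\le\tfrac{B_2}{\ell}\Vert u_t\Vert_2^2+\tfrac{\ell}{4B_2}\Vert u\Vert_2^2\le\tfrac{B_2}{\ell}\Vert u_t\Vert_2^2+\tfrac{\ell}{4}\Vert\nabla u\Vert_2^2$, and then this $\tfrac\ell4\Vert\nabla u\Vert_2^2$ is what the $\eta$-choice in the memory term must be set to leave room for (take $\eta=\ell/4$ there, giving $\tfrac1\ell$ on the squared-memory integral after also using $g$'s monotonicity bound $\int_0^t g\le 1-\ell$). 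Collecting everything reproduces \eqref{rr3.3}.

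The main obstacle is the bookkeeping of the several small parameters so that the net coefficient of $\Vert\nabla u(t)\Vert_2^2$ comes out exactly $-\ell/2$ while the memory integral carries coefficient exactly $1/\ell$ and the $u_t$ term exactly $1+B_2/\ell$; this forces a specific, consistent choice of the Young parameters across the memory and damping cross terms rather than independent ones. Everything else — the differentiation, the integration by parts justified by $u\in H_0^1(\Omega)$, and the Poincaré-type embedding — is routine given Lemma \ref{lemma3} and assumption (A1).
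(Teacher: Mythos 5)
Your proposal is correct and follows essentially the same route as the paper: differentiate $I_3$, substitute the equation and integrate by parts, split $\nabla u(s)=\nabla u(t)-(\nabla u(t)-\nabla u(s))$ to extract $-\ell\Vert\nabla u\Vert_2^2$, then apply Cauchy/Young with parameter $\ell/4$ to the memory cross term and the $\frac{B_2}{\ell}$--$\frac{\ell}{4B_2}$ split (i.e.\ $\delta=\frac{\ell}{4B_2}$ in the paper) plus Poincar\'e to the damping cross term. Your first trial choice $\eta=\ell/2$ is an arithmetic slip (it would give $\frac{1}{2\ell}$, not $\frac1\ell$), but you correct it to $\eta=\ell/4$, and the final bookkeeping reproduces \eqref{rr3.3} exactly as in the paper.
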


\begin{proof}
By direct differentiation and the governing equation in \eqref{1.1}, we obtain
\begin{align}
\frac{\rd}{\rd t}I_3(t) & \leq-\ell\Vert\nabla u(t)\Vert_2^2+\Vert u_t(t)\Vert_2^2+\int_{\Omega}\nabla u(t)\cdot\int_0^t g(t-s)(\nabla u(s)-\nabla u(t))\,\rd s\rd x\nonumber\\
& \quad\,-\int_{\Omega}u_t(t)u(t)\,\rd x+\alpha\int_{\Omega}|u(t)|^p\log|u(t)|\,\rd x.\label{r3.5}
\end{align}
For the third and fourth terms on the right-hand side of \eqref{r3.5}, we apply Cauchy's inequality to estimate
\begin{align*}
& \quad\,\int_{\Omega}\nabla u(t)\cdot\int_0^t g(t-s)(\nabla u(s)-\nabla u(t))\,\rd s\rd x\\
& \leq\frac{\ell}{4}\Vert\nabla u(t)\Vert_2^2+\frac{1}{\ell}\int_{\Omega}\left\vert\int_0^t g(t-s)(\nabla u(s)-\nabla u(t))\,\rd s\right\vert^2\rd x,
\end{align*}
and
\begin{align*}
-\int_{\Omega}u_t(t)u(t)\,\rd x & \leq\int_\Om|u_t(t)u(t)|\,\rd x
\le\int_{\Omega}\left(\frac{1}{4\delta}\vert u_t(t)\vert^2+\delta|u(t)|^2\right)\,\rd x\\
& \leq\frac{1}{4\delta}\|u_t(t)\|_2^{2}+\delta B_2\Vert\nabla u(t)\Vert_2^2,
\end{align*}
where we used Lemma \ref{lemma3} with $k=2$ and $\de>0$ is a constant. Substituting the above two inequalities into \eqref{r3.5}, we arrive at the desired estimate \eqref{rr3.3} by simply choosing $\delta=\frac{\ell}{4 B_2}$.
\end{proof}

\begin{lemma}\label{le3.3}
Let $(u_0,u_1)\in H_0^1(\Omega)\times L^2(\Omega)$ be given. Under the same assumptions of Lemma $\ref{lemma 2.7},$ the function $I_4(t)$ defined above satisfies
\begin{align}
\frac{\rd}{\rd t}I_4(t) & \leq\delta(1+\alpha\xi_1)\Vert\nabla u(t)\Vert_2^2+c(\de)\int_{\Omega}\left\vert\int_0^t g(t-s)(\nabla u(t)-\nabla u(s))\,\rd s\right\vert^2\rd x\nonumber\\
& \quad\,+\left(\f1{4\de}+\de-\int_0^t g(s)\,\rd s\right)\Vert u_t(t)\Vert_2^2+\frac{g(0)B_2}{4\delta}(-g'\circ\nabla u)(t),\label{xx3.1}
\end{align}
where $\de>0$ is a constant to be determined later, $c(\de)>0$ denotes a generic constant depending on $\de$ such that $\lim_{\de\to0}c(\de)=\infty$ and
\begin{equation}\label{eq-xi}
\xi_1:=\f{B_2}{\e(p_1-2)}+\f{B_{2(p_2-1+\mu)}}{\e\mu}\left(\frac{2(1+\wt{C})E(0)}{\ell}\right)^{p_2-2+\mu}.
\end{equation}
\end{lemma}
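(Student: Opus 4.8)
\textbf{Proof proposal for Lemma \ref{le3.3}.} The plan is to differentiate $I_4(t)$ directly, substitute the governing equation in \eqref{1.1} to replace $u_{tt}$, and then bound each of the resulting terms using Cauchy's inequality with a common parameter $\delta>0$, together with Lemma \ref{lemma3} (Poincar\'e-Sobolev) and the energy bounds from Lemma \ref{lemma2.8}. Writing $\psi(t):=\int_0^t g(t-s)(u(t)-u(s))\,\rd s$ for brevity, we have
\[
\frac{\rd}{\rd t}I_4(t)=-\int_\Omega u_{tt}(t)\psi(t)\,\rd x-\int_\Omega u_t(t)\,\partial_t\psi(t)\,\rd x,
\]
and $\partial_t\psi(t)=\left(\int_0^t g(s)\,\rd s\right)u_t(t)+\int_0^t g'(t-s)(u(t)-u(s))\,\rd s$. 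Using the equation, $-u_{tt}=-\triangle u+\int_0^t g(t-s)\triangle u(s)\,\rd s+u_t-\alpha|u|^{p(x)-2}u\log|u|$, so after an integration by parts in $x$ the term $-\int_\Omega u_{tt}\psi\,\rd x$ splits into a gradient term $\int_\Omega\nabla u(t)\cdot\nabla\psi(t)\,\rd x$, a memory-gradient term $-\int_\Omega\left(\int_0^t g(t-s)\nabla u(s)\,\rd s\right)\cdot\nabla\psi(t)\,\rd x$, a damping cross term $\int_\Omega u_t(t)\psi(t)\,\rd x$, and a logarithmic-nonlinearity cross term $-\alpha\int_\Omega|u|^{p(x)-2}u\log|u|\,\psi(t)\,\rd x$.

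Next I would estimate these pieces one at a time. The two gradient-type terms, after rewriting $\int_0^t g(t-s)\nabla u(s)\,\rd s=\left(\int_0^t g(s)\,\rd s\right)\nabla u(t)-\int_0^t g(t-s)(\nabla u(t)-\nabla u(s))\,\rd s$ and noting $\nabla\psi(t)=\int_0^t g(t-s)(\nabla u(t)-\nabla u(s))\,\rd s$, collapse into terms controlled by $\delta\|\nabla u(t)\|_2^2$ plus $c(\delta)\int_\Omega|\nabla\psi(t)|^2\rd x$ via Cauchy's inequality; this is where the $\delta\|\nabla u(t)\|_2^2$ and $c(\delta)\int_\Omega|\int_0^t g(t-s)(\nabla u(t)-\nabla u(s))\,\rd s|^2\rd x$ contributions in \eqref{xx3.1} originate, with $c(\delta)\to\infty$ as $\delta\to0$. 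The damping terms combine with the $-\left(\int_0^t g(s)\,\rd s\right)\|u_t(t)\|_2^2$ coming from $\partial_t\psi$; the cross terms $\int_\Omega u_t\psi\,\rd x$ are split as $\frac{\delta}{?}\|u_t\|_2^2+\frac{1}{4\delta}\int_\Omega|\psi|^2\rd x$ and then $\int_\Omega|\psi(t)|^2\rd x\le\left(\int_0^t g(s)\,\rd s\right)(g\circ\nabla u)(t)\le\cdots$ is absorbed using Lemma \ref{lemma3} with $k=2$; the term $\int_0^t g'(t-s)(u(t)-u(s))\,\rd s$ against $u_t$ is handled by Cauchy's inequality and Jensen's inequality on the $g'$-weighted average, producing the $\frac{g(0)B_2}{4\delta}(-g'\circ\nabla u)(t)$ term (using $-g'\ge0$ and $g(0)$ as an upper bound of $-g'$'s total mass weight). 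Collecting all $\|u_t(t)\|_2^2$ coefficients yields $\frac1{4\delta}+\delta-\int_0^t g(s)\,\rd s$ as in \eqref{xx3.1}.

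The one genuinely new ingredient is the logarithmic-nonlinearity cross term $-\alpha\int_\Omega|u|^{p(x)-2}u\log|u|\,\psi(t)\,\rd x$, which must be shown to contribute at most $\delta\alpha\xi_1\|\nabla u(t)\|_2^2$ plus something absorbable into $\int_\Omega|\psi|^2\rd x$; after Cauchy, this reduces to bounding $\int_\Omega\big||u|^{p(x)-2}u\log|u|\big|^2\rd x$, i.e.\ $\int_\Omega|u|^{2(p(x)-1)}(\log|u|)^2\rd x$. I would split $\Omega=\Omega_1\cup\Omega_2$ as in \eqref{eq-split}: on $\Omega_1$ (where $|u|<1$) use $|u|^{2(p(x)-1)}(\log|u|)^2\le|u|^{2(p_1-1)}(\log|u|)^2$ and the elementary bound $x^{2(p_1-2)}(\log x)^2\lesssim$ const for the piece $|u|^{2(p_1-2)}$ extracted, leaving $|u|^2$ integrable and controlled via Lemma \ref{lemma3} with $k=2$ (this yields the $\frac{B_2}{\e(p_1-2)}$ term); on $\Omega_2$ (where $|u|\ge1$) use $(\log|u|)^2\le(\e\mu)^{-2}|u|^{2\mu}$-type bounds, more precisely $|u|^{-\mu}\log|u|\le(\e\mu)^{-1}$, giving $|u|^{2(p_2-1)}(\log|u|)^2\le(\e\mu)^{-2}|u|^{2(p_2-1+\mu)}$, and then Lemma \ref{lemma3} with $k=2(p_2-1+\mu)$ together with the uniform bound $\|\nabla u(t)\|_2^2\le\frac{2(1+\wt C)E(0)}{\ell}$ from \eqref{r2} and Lemma \ref{lemma2.8} converts the excess power $\|\nabla u(t)\|_2^{2(p_2-2+\mu)}$ into the constant $\left(\frac{2(1+\wt C)E(0)}{\ell}\right)^{p_2-2+\mu}$, producing the second term of $\xi_1$ in \eqref{eq-xi}. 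The main obstacle is precisely this last estimate: one has to manage the variable exponent $p(x)$ carefully so that a \emph{single} pair of embedding constants ($B_2$ and $B_{2(p_2-1+\mu)}$) suffices uniformly in $x$, which is why the split at $|u|=1$ and the log-H\"older continuity \eqref{1.3} (guaranteeing $2(p_2-1+\mu)<2_*$ for $\mu$ small, hence the applicability of Lemma \ref{lemma3}) are essential; everything else is a bookkeeping exercise in Cauchy's inequality with the single parameter $\delta$.
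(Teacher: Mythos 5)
Your proposal follows essentially the same route as the paper's proof: differentiate $I_4$, substitute the equation from \eqref{1.1}, estimate every resulting term by Cauchy's inequality with one parameter $\delta$ plus Lemma \ref{lemma3} (Poincar\'e with $k=2$, Sobolev with $k=2(p_2-1+\mu)$), use the Cauchy--Schwarz bound with total mass $g(0)$ on the $g'$-weighted term, and treat the logarithmic term by the split \eqref{eq-split} together with the elementary bounds $x^{p_1-2}|\log x|\le(\e(p_1-2))^{-1}$, $x^{-\mu}\log x\le(\e\mu)^{-1}$ and the uniform bound $\|\nabla u(t)\|_2^2\le 2(1+\wt C)E(0)/\ell$ from Lemma \ref{lemma2.8}. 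Two cosmetic remarks: squaring the nonlinearity after Cauchy, as you do, yields $(\e(p_1-2))^{-2}$ and $(\e\mu)^{-2}$ rather than the first powers in the stated $\xi_1$ (the paper applies the pointwise bound $\bigl||u|^{p-2}u\log|u|\bigr|\le\frac{|u|}{\e(p_1-2)}+\frac{|u|^{p_2-1+\mu}}{\e\mu}$ \emph{before} Cauchy), which is harmless since only some fixed constant is needed; and the condition $2(p_2-1+\mu)<2_*$ follows from \eqref{1.2} alone, not from the log-H\"older continuity \eqref{1.3}, which plays no role in this lemma.
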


\begin{proof}
Directly differentiating $I_4(t)$ and employing the governing equation of \eqref{1.1} yield
\begin{align*}
\frac{\rd}{\rd t}I_4(t) & =-\int_{\Omega}u_{tt}\int_0^t g(t-s)(u(t)-u(s))\,\rd s\rd x\\
& \quad\,-\int_{\Omega}u_t(t)\int_0^t g'(t-s)(u(t)- u(s))\,\rd s\rd x-\int_0^t g(s)\,\rd s\Vert u_t(t)\Vert_2^2\\
& =:\sum_{i=1}^6I_2^i(t),
\end{align*}
where
\begin{align*}
I_2^1(t) & :=\int_{\Omega}\left(1-\int_0^t g(s)\,\rd s\right)\nabla u(t)\cdot\int_0^t g(t-s)(\nabla u(t)-\nabla u(s))\,\rd s\rd x,\\
I_2^2(t) & :=\int_{\Omega}\left\vert\int_0^t g(t-s)(\nabla u(t)-\nabla u(s))\,\rd s\right\vert^2\rd x,\\
I_2^3(t) & :=\int_{\Omega} u_t(t)\left(\int_0^t g(t-s)(u(t)-u(s))\,\rd s\right)\rd x,\\
I_2^4(t) & :=-\alpha\int_{\Omega}|u(t)|^{p-2}u(t)\log|u(t)|\left(\int_0^t g(t-s)(u(t)-u(s))\,\rd s\right)\,\rd x,\\
I_2^5(t) & :=-\int_{\Omega}u_t(t)\int_0^t g'(t-s)(u(t)- u(s))\,\rd s\rd x,\\
I_2^6(t) & :=-\int_0^t g(s)\,\rd s\,\Vert u_t(t)\Vert_2^2 .
\end{align*}

Now we estimate the each of the 6 terms above. For $I_2^1(t)$, we apply Cauchy's inequality with $\de>0$ to deduce
\begin{equation}\label{r3.10}
I_2^1(t)\leq\delta\Vert\nabla u(t)\Vert_2^2+\frac{1}{4\delta}\int_{\Omega}\left\vert\int_0^t g(t-s)(\nabla u(t)-\nabla u(s))\,\rd s\right\vert^2\rd x.
\end{equation}
Similarly, we estimate $I_2^3(t)$ as
\begin{align}
I_2^3(t) & \leq\delta\int_{\Omega}\left(\int_0^{t}g(t-s)(u(t)-u(s))\,\rd s\right)^{2}\rd x+\f1{4\delta}\|u_t(t)\|_2^{2}\nonumber\\
 & \leq B_2\delta\int_{\Omega}\left|\int_0^{t}g(t-s)(\nb u(t)-\nb u(s))\,\rd s\right|^{2}\rd x+\f1{4\delta}\|u_t(t)\|_2^{2},\label{r3.12}
\end{align}
where we used the Poincar\'e inequality. In a same manner, we first estimate $I_2^5(t)$ as
\begin{equation}\label{eq-I25}
I_2^5(t)\leq\delta\Vert u_t(t)\Vert_2^2+\f1{4\de}\int_\Om\left\vert\int_0^t g'(t-s)|u(t)-u(s)|\,\rd s\right\vert^2\rd x.
\end{equation}
By assumption (A1) on $g$, we apply the Cauchy-Schwarz inequality to treat
\begin{align*}
\left\vert\int_0^t g'(t-s)|u(t)-u(s)|\,\rd s\right\vert^2 & \le\int_0^t(-g'(t-s))\,\rd s\int_0^t(-g'(t-s))|u(t)-u(s)|^2\,\rd s\\
& \le g(0)\int_0^t(-g'(t-s))|u(t)-u(s)|^2\,\rd s.
\end{align*}
Then we employ the Poincar\'e inequality and recall the definition of $(g\circ\nb u)(t)$ to treat the second term on the right-hand side of \eqref{eq-I25} as
\[
\int_\Om\left\vert\int_0^t g'(t-s)|u(t)-u(s)|\,\rd s\right\vert^2\rd x\le g(0)\int_0^t(-g'(t-s))\|u(t)-u(s)\|_2^2\,\rd x\le g(0)B_2(-g'\circ\nb u)(t),
\]
indicating
\begin{equation}\label{r3.23}
I_2^5(t)\leq\delta\Vert u_t(t)\Vert_2^2+\frac{g(0)B_2}{4\delta}(-g'\circ\nabla u)(t).
\end{equation}

Next, we deal with $I_2^4(t)$. For $v\in H_0^1(\Om)$, we split $\Om$ as that in \eqref{eq-split} and utilize the assumption \eqref{1.2} on $p$ to obtain
\[
\left||v|^{p-2}v\log|v|\right|\le\begin{cases}
\left||v|^{p_1-2}v\log|v|\right| & \mbox{in }\Om_1,\\
\left||v|^{p_2-2}v\log|v|\right| & \mbox{in }\Om_2.
\end{cases}
\]
For $\Om_1$, we use the inequality $|x^{p_1-2}\log x|<\f1{\e(p_1-2)}$ for $0<x<1$ to bound
\[
\left||v|^{p-2}v\log|v|\right|\le\f{|v|}{\e(p_1-2)}\quad\mbox{in }\Om_1.
\]
For $\Om_2$, we take advantage of the inequality $|x^{-\mu}\log x|<\f1{\e\mu}$ for $x>1$, where $\mu>0$ is sufficiently small such that $2(p_2-1+\mu)<\f{2n}{n-2}$. Then we can bound
\[
\left||v|^{p-2}v\log|v|\right|\le\f{|v|^{p_2-1+\mu}}{\e\mu}\quad\mbox{in }\Om_2.
\]
Combining the above two cases, we obtain
\[
\left||v|^{p-2}v\log|v|\right|\leq\f{|v|}{\e(p_1-2)}+\f{|v|^{p_2-1+\mu}}{\e\mu}\quad\mbox{in }\Om.
\]
Therefore, we apply the above estimate with $v=u(t)$ to $I_2^4(t)$ and employ again Cauchy's inequality with $\de>0$ to deduce
\begin{align*}
I_2^4(t) & \leq\int_{\Omega}\left(\f{|u(t)|}{\e(p_1-2)}+\f{|u(t)|^{p_2-1+\mu}}{\e\mu}\right)\left|\int_0^{t}g(t-s)(u(t)-u(s))\,\rd s\right|\rd x\\
& \leq\delta\int_\Om\left(\f{|u(t)|^2}{\e(p_1-2)}+\f{|u(t)|^{2(p_2-1+\mu)}}{\e\mu}\right)\rd x\\
& \quad\,+\f1{4\de}\left(\f1{\e(p_1-2)}+\f1{\e\mu}\right)\int_{\Omega}\left\vert\int_0^{t}g(t-s)(u(t)-u(s))\,\rd s\right\vert^2\rd x\\
& \leq\delta\left(\f{\|u(t)\|_2^2}{\e(p_1-2)}+\f{\|u(t)\|_{2(p_2-1+\mu)}^{2(p_2-1+\mu)}}{\e\mu}\right)+\frac{C}{4\delta}\int_{\Omega}\left\vert\int_0^{t}g(t-s)(u(t)-u(s))\,\rd s\right\vert^2\rd x.
\end{align*}
Finally, we apply Lemma \ref{lemma3} with $k=2$ and $k={2(p_2-1+\mu)}$ to dominate
\begin{align*}
I_2^4(t) & \leq\delta\left(\f{B_2}{\e(p_1-2)}\Vert\nabla u(t)\Vert_2^{2}+\f{B_{2(p_2-1+\mu)}}{\e\mu}\Vert\nabla u(t)\Vert_2^{2(p_2-1+\mu)}\right)\\
& \quad\,+\frac{C}{4\delta}\int_{\Omega}\left\vert\int_0^{t}g(t-s)(\nb u(t)-\nb u(s))\,\rd s\right\vert^2\rd x.
\end{align*}
For $\|\nb u(t)\|_2^{2(p_2-1+\mu)}$, we further employ the same argument as that for Lemma \ref{lemma2.8} to estimate
\[
\|\nb u(t)\|_2^{2(p_2-1+\mu)}=\|\nb u(t)\|_2^{2(p_2-2+\mu)}\|\nb u(t)\|_2^2\le\left(\frac{2(1+\wt{C})E(0)}{\ell}\right)^{p_2-2+\mu}\Vert\nabla u(t)\Vert_2^{2}.
\]
Then we conclude
\begin{equation}\label{r3.22}
I_2^4(t) \leq\delta\xi_1\Vert\nabla u(t)\Vert_2^{2}+\frac{C}{4\delta}\int_{\Omega}\left\vert\int_0^{t}g(t-s)(\nabla u(t)-\nabla u(s))\,\rd s\right\vert^2\rd x,
\end{equation}
where $\xi_1$ was defined in \eqref{eq-xi}.

Combining all the estimates \eqref{r3.10}, \eqref{r3.12}, \eqref{r3.23} and \eqref{r3.22}, we eventually arrived at \eqref{xx3.1}.
\end{proof}

Now, we are well prepared to proceed to the proof of Theorem \ref{theorem3.2}. We start with introducing an auxiliary function
$$
L(t):=N_1E(t)+I_3(t)+N_2I_4(t),
$$
where $N_1,N_2>0$ are constants to be determined later. By H\"older's and Cauchy's inequalities, it is easy to verify that $L(t)\sim E(t)$, i.e., $L(t)$ is equivalent to $E(t)$ in the sense of energy comparison.

Fix $t_1>0$ and set $g_1:=\int_0^{t_1}g(s)\,\rd s>0$. Combining \eqref{2.5}, \eqref{rr3.3} and \eqref{xx3.1} with $\delta=\frac{\ell}{4N_2(1+\alpha\xi_1)}$, we estimate $L'(t)$ for all $t\geq t_1$ that
\begin{align*}
L'(t) & \leq-\frac{\ell}{4}\Vert\nabla u(t)\Vert_2^2
-\left\{N_1-1-\f{B_2}\ell-N_2\left(\f1{4\de}+\de-g_1\right)\right\}\|u_t(t)\|_2^2\\
& \quad\,+\left(\frac{N_1}{2}-\frac{g(0)B_2N_2}{4\delta}\right)(g'\circ\nabla u)(t)
+\alpha\int_{\Omega}|u(t)|^p\log|u(t)|\,\rd x\\
& \quad\,+\left(\f1\ell+c(\delta)N_2\right)\int_{\Omega}\left\vert\int_0^t g(t-s)(\nabla u(t)-\nabla u(s))\,\rd s\right\vert^2\rd x.
\end{align*}
At this point, we can choose sufficiently large $N_1,N_2>0$ such that 
$$
N_3:=N_1-1-\f{B_2}\ell-N_2\left(\f1{4\de}+\de-g_1\right)>0,\quad\frac{N_1}{2}-\frac{g(0)B_2N_2}{4\delta}>0.
$$
Then the negativity of $(g'\circ\nb u)(t)$ indicates
\begin{align}
L'(t) & \leq-\frac{\ell}{4}\Vert\nabla u(t)\Vert_2^2-N_3\|u_t(t)\|_2^2+\alpha\int_{\Omega}|u(t)|^p\log|u(t)|\,\rd x\nonumber\\
& \quad\,+N_4\int_{\Omega}\left\vert\int_0^t g(t-s)(\nabla u(t)-\nabla u(s))\,\rd s\right\vert^2\rd x,\label{4.5}
\end{align}
where $N_4:=\f1\ell+c(\delta)N_2$. Recalling the definition of $M(\delta)$ in \eqref{eq-IM}, we further bound
\begin{align*}
& \quad\,\int_{\Omega}\left\vert\int_0^t g(t-s)(\nabla u(t)-\nabla u(s))\,\rd s\right\vert^2\rd x\\
& \leq\int_{\Omega}\int_0^t\frac{g(s)}{K_{\delta}(s)}\,\rd s\int_0^t K_{\delta}(t-s)g(t-s)\vert\nabla u(t)-\nabla u(s)\vert^2\,\rd s\rd x\\
& \leq M(\delta)\int_{\Omega}\int_0^t K_{\delta}(t-s)g(t-s)\vert\nabla u(t)-\nabla u(s)\vert^2\,\rd s\rd x,
\end{align*}
which, together with \eqref{4.5}, indicates
\begin{align}
L'(t) & \leq-\frac{\ell}{4}\Vert\nabla u(t)\Vert_2^2-N_3\|u_t(t)\|_2^2+\alpha\int_{\Omega}|u(t)|^p\log|u(t)|\,\rd x\nonumber\\
& \quad\,+N_4M(\delta)\int_{\Omega}\int_0^t K_{\delta}(t-s)g(t-s)\vert\nabla u(t)-\nabla u(s)\vert^2\,\rd s\rd x.\label{4.6}
\end{align}

Next, we further introduce
$$
J(t):=L(t)+\frac{\ell}{32}I_1(t)+2N_4I_2(t).
$$
Then it follows from \eqref{r3.3} and \eqref{4.6} that
\begin{align*}
J'(t) & \leq-\left(\frac{\ell}{4}-\frac{\ell}{16}-2N_4\delta M(\delta)\right)\Vert\nabla u(t)\Vert_2^2-N_3\Vert u_t(t)\Vert_2^2-\frac{\ell}{64}(g\circ\nb u)(t)\\
& \quad\,+\alpha\int_{\Omega}|u(t)|^p\log|u(t)|\rd x .
\end{align*}
Owing to the convergence \eqref{rr3.5}, we can choose sufficiently small $\delta>0$ such that $2N_4\delta M(\delta)=\frac{\ell}{16}$. Therefore, in view of the original energy $E(t)$, we obtain
\begin{align}
J'(t) & \leq-\frac\ell8\Vert\nabla u(t)\Vert_2^2-N_3\|u_t(t)\|_2^2-\frac{\ell}{64}(g\circ\nb u)(t)+\alpha\int_{\Omega}|u(t)|^p\log|u(t)|\,\rd x\nonumber\\
& \leq-\ve E(t)-\left(\frac\ell8-\frac{\ve}{2}\right)\Vert\nabla u(t)\Vert_2^2-\left(N_3-\frac{\ve}{2}\right)\Vert u_t(t)\Vert_2^2-\left(\frac{\ell}{64}-\frac{\ve}{2}\right)(g\circ\nabla u)(t)\nonumber\\
& \quad\,+\alpha\int_{\Omega}\left(1-\frac{\ve}p\right)|u(t)|^p\log|u(t)|\,\rd x+\frac{\alpha \ve}{p_1^2}\int_{\Omega}|u(t)|^p\,\rd x,\label{4.7}
\end{align}
where $\ve\in(0, p_1)$ is a constant. Employing the same domain splitting argument as that for \eqref{r3}, we estimate 
\begin{align}
\int_{\Omega}\left(1-\frac{\ve}p\right)|u(t)|^p\log|u(t)|\,\rd x &
=\left(\int_{\Omega_1}+\int_{\Omega_2}\right)\left(1-\frac{\ve}p\right)|u(t)|^p\log|u(t)\,\rd x\nonumber\\
& \leq\frac{1}{\e\mu}\int_{\Omega_2}\left(1-\frac{\ve}p\right)|u(t)|^{p_2}|u(t)|^{\mu}\,\rd x
\leq\frac{1}{\e\mu}\int_{\Omega}|u(t)|^{p_2+\mu}\,\rd x\nonumber\\
& \leq\frac{B_{p_2+\mu}}{\e\mu}\Vert\nabla u(t)\Vert_2^{p_2+\mu}
\leq\xi_2\Vert\nabla u(t)\Vert_2^2,\label{4.8}
\end{align}
where
$$
\xi_2:=\frac{B_{p_2+\mu}}{\e\mu}\left(\frac{2(1+\wt{C})E(0)}{\ell}\right)^{\frac{p_2+\mu-2}{2}}$$
and $\mu>0$ is a sufficiently small constant such that $p_2+\mu<2_*=\frac{2n}{n-2}$. 

On the other hand, according to Lemma \ref{lemma2}, there exists a constant $B_{p(x)}>0$ such that
\[
\|v\|_{p(x)}\le B_{p(x)}\|\nb v\|_2,\quad\forall\,v\in H_0^1(\Om).
\]
Hence, making use of \eqref{2.1}, we deal with the last term in \eqref{4.7} in a similar manner as above as
\begin{align}
\int_{\Omega}|u(t)|^p\,\rd x & \leq\max\left\{\|u(t)\|_{p(x)}^{p_1},\|u(t)\|_{p(x)}^{p_2}\right\}\le\max\left\{B_{p(x)}^{p_1}\vert\vert\nabla u(t)\Vert_2^{p_1},B_{p(x)}^{p_2}\vert\vert\nabla u(t)\Vert_2^{p_2}\right\}\nonumber\\
& \leq\xi_3\Vert\nabla u(t)\Vert_2^{2},\label{4.9}
\end{align}
where
$$
\xi_3:=\max\left\{B_{p(x)}^{p_1}\left(\frac{2(1+\wt{C})E(0)}{\ell}\right)^{\frac{p_1-2}{2}},B_{p(x)}^{p_2}\left(\frac{2(1+\wt{C})E(0)}{\ell}\right)^{\frac{p_2-2}{2}}\right\}.
$$
Plugging \eqref{4.8} and \eqref{4.9} into \eqref{4.7}, we arrive at
\begin{align*}
J'(t) & \leq-\ve E(t)-\left(\frac\ell8-\frac{\ve}{2}-\al\xi_2-\f{\al\ve\xi_3}{p_1^2}\right)\Vert\nabla u(t)\Vert_2^2-\left(N_3-\frac{\ve}{2}\right)\Vert u_t(t)\Vert_2^2-\left(\frac{\ell}{64}-\frac{\ve}{2}\right)(g\circ\nabla u)(t).
\end{align*}
Choosing $\ve$ and $\alpha$ sufficiently small such that  
$$
\frac\ell8-\frac{\ve}{2}-\al\xi_2-\f{\al\ve\xi_3}{p_1^2}>0,\quad N_3-\frac{\ve}{2}>0,\quad\frac{\ell}{64}-\frac{\ve}{2}>0,
$$
we eventually conclude
\[
J'(t)\leq-\ve E(t).
\]
Since $J(t)\geq0$ for all $t\geq0$ and $J(t_0)\leq C E(0)$, we integrate the above inequality over $[t_0, \tau)$ for any $\tau>t_0$ to obtain
$$
\int_{t_0}^{\tau}E(t)\,\rd t\leq C E(0).
$$
Passing $\tau\to\infty$ yields the first estimate in \eqref{r3.1}, which, together with $E'(t)\leq0$, implies the second estimate in \eqref{r3.1}. The proof of Theorem \ref{theorem3.2} is completed.


\subsection{Proof of Theorem \ref{the3.2}}

The proof of Theorem \ref{the3.2} relies on the following key lemma.  

\begin{lemma}[see {\cite[Equation (44)]{q2}}]\label{lem3.6}
Let assumptions {\rm(A1)--(A2)} be satisfied. Then for all $t>0,$ there holds
\[
(g\circ\nabla u)(t)\leq\frac{1}{c}\ov{G}^{-1}\left(\frac{c(-g'\circ\nb u)(t)}{\zeta(t)}\right),
\]
where $c\in (0,1)$ is a sufficiently small constant, $G$ and $\zeta$ are defined in {\rm(A2)}, and $\ov{G}$ is a strictly increasing and strictly convex extension of $G$.
\end{lemma}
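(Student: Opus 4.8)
The plan is to convert the pointwise differential inequality of assumption {\rm(A2)} into an integral estimate via Jensen's inequality for the concave map $\ov{G}^{-1}$, and then to absorb an a priori bound into the constant $c$. If $G$ is linear the inequality is immediate, so assume $\ov{G}$ strictly increasing and strictly convex; then $\ov{G}^{-1}$ is increasing and concave with $\ov{G}^{-1}(0)=0$. First I would rewrite {\rm(A2)} as $\ov{G}(g(s))\le-g'(s)/\zeta(s)$ for $s>0$ (this is immediate where $g(s)\le r$ since there $\ov{G}=G$, and the remaining range $g(s)\in(r,g(0)]$ is handled via the extension construction of the Remark following {\rm(A2)}; cf.\ the closing paragraph), and then apply $\ov{G}^{-1}$ to get $g(s)\le\ov{G}^{-1}\!\big(-g'(s)/\zeta(s)\big)$. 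Since $\zeta$ is non-increasing, $\zeta(t-s)\ge\zeta(t)$ for $0\le s\le t$, hence $\tfrac{-g'(t-s)}{\zeta(t-s)}\le\tfrac{-g'(t-s)}{\zeta(t)}$, and monotonicity of $\ov{G}^{-1}$ upgrades the bound to $g(t-s)\le\ov{G}^{-1}\!\big(\tfrac{-g'(t-s)}{\zeta(t)}\big)$ for those $s$.

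Next I would substitute this into $(g\circ\nabla u)(t)=\int_0^t g(t-s)\|\nabla u(t)-\nabla u(s)\|_2^2\,\rd s$ and view $\rd\nu(s):=\|\nabla u(t)-\nabla u(s)\|_2^2\,\rd s$ as a finite measure on $[0,t]$ of total mass $m(t):=\int_0^t\|\nabla u(t)-\nabla u(s)\|_2^2\,\rd s$ (if $m(t)=0$ both sides vanish, since $\ov{G}^{-1}(0)=0$, so there is nothing to prove). Jensen's inequality for $\ov{G}^{-1}$ against the probability measure $\rd\nu/m(t)$ then gives
\begin{align*}
(g\circ\nabla u)(t) & \le\int_0^t\ov{G}^{-1}\!\Big(\frac{-g'(t-s)}{\zeta(t)}\Big)\,\rd\nu(s)\\
& \le m(t)\,\ov{G}^{-1}\!\left(\frac{1}{m(t)\,\zeta(t)}\int_0^t(-g'(t-s))\,\rd\nu(s)\right)=m(t)\,\ov{G}^{-1}\!\left(\frac{(-g'\circ\nabla u)(t)}{\zeta(t)\,m(t)}\right).
\end{align*}

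It then remains to bound $m(t)$ uniformly in $t$ and to absorb the factor $m(t)$ into the constant $c$. Using $\|\nabla u(t)-\nabla u(s)\|_2^2\le2\|\nabla u(t)\|_2^2+2\|\nabla u(s)\|_2^2$, the bound $\|\nabla u(\tau)\|_2^2\le\frac{2(1+\wt{C})}{\ell}E(\tau)$ following from \eqref{r2} and the definition \eqref{2.4} of $E$, and --- crucially --- the a priori estimates $\int_0^\infty E(\tau)\,\rd\tau\le CE(0)$ and $E(\tau)\le CE(0)(1+\tau)^{-1}$ of Theorem \ref{theorem3.2}, one obtains $m(t)\le2t\|\nabla u(t)\|_2^2+2\int_0^t\|\nabla u(s)\|_2^2\,\rd s\le m_0$ with $m_0$ independent of $t$. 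Fix any $c\in\big(0,\min\{1,1/m_0\}\big)$. Concavity of $\ov{G}^{-1}$ with $\ov{G}^{-1}(0)=0$ (equivalently, $y\mapsto\ov{G}^{-1}(y)/y$ non-increasing) shows that $a\mapsto a\,\ov{G}^{-1}(X/a)$ is non-decreasing in $a>0$ for every fixed $X\ge0$; evaluating this map at $a=m(t)\le1/c$ and at $a=1/c$ with $X=\tfrac{(-g'\circ\nabla u)(t)}{\zeta(t)}$ yields
\[
(g\circ\nabla u)(t)\le m(t)\,\ov{G}^{-1}\!\Big(\frac{X}{m(t)}\Big)\le\frac1c\,\ov{G}^{-1}(cX)=\frac1c\,\ov{G}^{-1}\!\left(\frac{c(-g'\circ\nabla u)(t)}{\zeta(t)}\right),
\]
which is the assertion.

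The step I expect to be the main obstacle is the uniform-in-$t$ control of $m(t)=\int_0^t\|\nabla u(t)-\nabla u(s)\|_2^2\,\rd s$: the crude pointwise bound on $\|\nabla u\|_2^2$ alone only gives $m(t)\lesssim t$, so one genuinely needs the energy decay of Theorem \ref{theorem3.2} (the $O((1+t)^{-1})$ rate and the integrability of $E$ on $(0,\infty)$, which make $t\|\nabla u(t)\|_2^2$ and $\int_0^t\|\nabla u(s)\|_2^2\,\rd s$ bounded). A secondary, purely technical point is the passage from $G$ to its convex extension $\ov{G}$ on $(r,g(0)]$ when $r<g(0)$; this is handled exactly as in \cite{11,q2} and does not affect the structure of the argument.
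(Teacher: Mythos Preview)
The paper does not supply its own proof of this lemma; it simply imports the estimate from \cite[Equation~(44)]{q2}. Your argument is correct and is exactly the standard route used there and in \cite{11}: convert {\rm(A2)} into the pointwise bound $g(t-s)\le\ov G^{-1}\!\big(-g'(t-s)/\zeta(t)\big)$, apply Jensen's inequality for the concave map $\ov G^{-1}$ against the finite measure $\|\nabla u(t)-\nabla u(s)\|_2^2\,\rd s$, and then absorb the total mass $m(t)$ into $c$ via the monotonicity of $a\mapsto a\,\ov G^{-1}(X/a)$. Your remark that the uniform-in-$t$ control of $m(t)$ genuinely requires the decay estimates \eqref{r3.1} of Theorem~\ref{theorem3.2} is spot on; the lemma as stated (only under {\rm(A1)}--{\rm(A2)}) is slightly loose on this point, but since it is invoked solely inside the proof of Theorem~\ref{the3.2}, whose hypotheses subsume those of Theorem~\ref{theorem3.2}, no gap arises in the application.
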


Now we can proceed to the proof of Theorem \ref{the3.2}. By the Cauchy-Schwarz inequality and assumption (A1), first we obtain
\begin{align*}
\int_{\Omega}\left\vert\int_0^t g(t-s)(\nabla u(t)-\nabla u(s))\,\rd s\right\vert^2\rd x
& \leq\int_{\Omega}\int_0^t g(s)\,\rd s\int_0^t g(t-s)\vert\nabla u(t)-\nabla u(s)\vert^2\,\rd s\rd x\\
& \leq(1-\ell)(g\circ\nabla u)(t),
\end{align*}
which, together with \eqref{4.6}--\eqref{4.9}, indicates
\[
L'(t)\leq-\varepsilon E(t)+C(g\circ\nabla u)(t).
\]
According to Lemma \ref{lem3.6}, we deduce
\[
L'(t)\leq-\varepsilon E(t)+C\ov{G}^{-1}\left(\frac{c(-g'\circ\nb u)(t)}{\zeta(t)}\right).
\]
Next, defining
\[
F(t):=\ov{G}'\left(\varepsilon_1\frac{E(t)}{E(0)}\right)L(t)
\]
with a constant $\varepsilon_1\in(0,r)$, we immediately see $F(t)\sim E(t)$. Noting that $\ov{G}''(t)\geq0$ and $E'(t)\leq0$, we obtain
\begin{align}
F'(t) & =\ov{G}'\left(\varepsilon_1\frac{E(t)}{E(0)}\right)L'(t)+\varepsilon_1\frac{E'(t)}{E(0)}\ov{G}''\left(\varepsilon_1\frac{E(t)}{E(0)}\right)L(t)\nonumber\\
& \leq-\varepsilon E(t)\ov{G}'\left(\varepsilon_1\frac{E(t)}{E(0)}\right)+C\ov{G}'\left(\varepsilon_1\frac{E(t)}{E(0)}\right)\ov{G}^{-1}\left(\frac{c(-g'\circ\nb u)(t)}{\zeta(t)}\right).\label{6.5}
\end{align}
Let $\ov{G}^*$ be the convex conjugate of $\ov{G}$ in the sense of Young, that is,
\begin{equation}\label{6.6}
\ov{G}^*(s):=s(\ov{G}')^{-1}(s)-(\ov{G}\circ(\ov{G}')^{-1})(s),\quad s\in(0,\ov{G}'(r)].
\end{equation}
Then it is known that $\ov{G}^*$ satisfies the generalized Young inequality 
\[
AB\leq\ov{G}^*(A)+\ov{G}(B),\quad\forall\,A\in(0,\ov{G}'(r)],\ \forall\,B\in (0,r].
\]
Thus, taking
\[
A=\ov{G}'\left(\varepsilon_1\frac{E(t)}{E(0)}\right),\quad B=\ov{G}^{-1}\left(\frac{c(-g'\circ\nb u)(t)}{\zeta(t)}\right)
\]
in the above inequality and using \eqref{6.6}, we can further estimate \eqref{6.5} as
\begin{align*}
F'(t) & \leq-\varepsilon E(t)\ov{G}'\left(\varepsilon_1\frac{E(t)}{E(0)}\right)+C\ov{G}^*\left(\ov{G}'\left(\varepsilon_1\frac{E(t)}{E(0)}\right)\right)+C\frac{(-g'\circ\nb u)(t)}{\zeta(t)}\\
& \leq-\varepsilon E(t)\ov{G}'\left(\varepsilon_1\frac{E(t)}{E(0)}\right)+C\varepsilon_1\frac{E(t)}{E(0)}\ov{G}'\left(\varepsilon_1\frac{E(t)}{E(0)}\right)+C\frac{(-g'\circ\nb u)(t)}{\zeta(t)}.
\end{align*}
Multiplying both sides of the above inequality by $\zeta(t)$ and using the facts that
$$
(-g'\circ\nb u)(t)\le-C E'(t),\quad\varepsilon_1\frac{E(t)}{E(0)}<r,\quad\ov{G}'\left(\varepsilon_1\frac{E(t)}{E(0)}\right)=G'\left(\varepsilon_1\frac{E(t)}{E(0)}\right),
$$
we derive
\begin{align}
\zeta(t)F'(t) & \leq-\varepsilon \zeta(t)E(t)\ov{G}'\left(\varepsilon_1\frac{E(t)}{E(0)}\right)+C\varepsilon_1\zeta(t)\frac{E(t)}{E(0)}\ov{G}'\left(\varepsilon_1\frac{E(t)}{E(0)}\right)+C(-g'\circ\nb u)(t)\nonumber\\
& \leq-(\varepsilon E(0)-C\varepsilon_1)\zeta(t)\frac{E(t)}{E(0)}G'\left(\varepsilon_1\frac{E(t)}{E(0)}\right)-C E'(t).\label{6.9}
\end{align}

Next, we further introduce $F_1(t):=\zeta(t)F(t)+C E(t)$. Then it is readily seen that $F_1(t)\sim E(t)$, i.e., there exist constants $\beta_1,\beta_2>0$ such that
\begin{equation}\label{6.10}
\beta_1F_1(t)\leq E(t)\leq\beta_2 F_1(t).
\end{equation}
Choosing $\varepsilon_1>0$ sufficiently small such that $\beta_3:=\varepsilon E(0)-C\varepsilon_1>0$, we differentiate $F_1(t)$ and utilize \eqref{6.9} to deduce
\begin{equation}
F_1'(t)\leq-\beta_3\zeta(t)\frac{E(t)}{E(0)}G'\left(\varepsilon_1\frac{E(t)}{E(0)}\right)
=-\beta_3{\zeta(t)}G_2\left(\frac{E(t)}{E(0)}\right),
\label{6.11}
\end{equation}
where we put $G_2(t):=t\,G'(\varepsilon_1t)$. Since $G_2'(t)=G'(\varepsilon_1t)+\varepsilon_1t\,G''(\varepsilon_1t)$, it follows from the strict monotonicity and the strict convexity of $G$ on $(0,r]$ that $G_2(t)>0$ and $G_2'(t)>0$ for all $t\in(0,1]$. Therefore, further defining $F_2(t):=\frac{\beta_1F_1(t)}{E(0)}$, we see $F_2(t)\sim E(t)$ and the combination of \eqref{6.10} and \eqref{6.11} implies
\[
F_2'(t)\leq-k_1\zeta(t)G_2\left(\frac{E(t)}{E(0)}\right)\leq-k_1\zeta(t)G_2\left(\frac{\beta_1F_1(t)}{E(0)}\right)=-k_1\zeta(t)G_2(F_2(t)),
\]
where $k_1:=\frac{\beta_1\beta_3}{E(0)}$. Hence, integrating both sides over $(t_1,t]$ yields
\[
\int_{\varepsilon_1F_2(t)}^{\varepsilon_1F_2(t_1)}\frac{1}{\tau G'(\tau)}\,\rd\tau=\int_{t_1}^t-\frac{F'_2(s)}{G_2(F_2(s))}\,\rd s\geq k_1\int_{t_1}^t\zeta(s)\,\rd s,
\]
where we performed a change of variables with $\tau=F_2(s)$. Finally, we conclude
\[
F_2(t)\leq\frac{1}{\varepsilon_1}G_1^{-1}\left(k_1\int_{t_1}^t\zeta(s)\,\rd s\right),
\]
where $G_1(t):=\int_t^r\frac{1}{s\,G'(s)}\,\rd s$ and we used the fact that $G_1$ is strictly decreasing on $(0,r]$. This eventually lead us to \eqref{rr3.44} in view of the equivalence $F_2(t)\sim E(t)$, which finalized the proof of Theorem \ref{the3.2}.


\subsection{Proof of Theorem \ref{theorem3.1}}

Similarly as the previous subsection, we first prepare a key lemma for the proof of Theorem \ref{theorem3.1}.

\begin{lemma}[see \cite{16}]\label{lemma3.1}
Let $E:[0,\infty)\longrightarrow(0,\infty)$ be a non-increasing function and $\phi:[0,\infty)\longrightarrow[0,\infty)$ be a strictly increasing $C^1$ function such that 
$$
\phi(0)=0,\quad\lim_{t\to\infty}\phi(t)=\infty.
$$
Suppose that there exist constants $\sigma\geq0$ and $\omega>0$ such that  
$$
\int_t^\infty\phi'(s)E(s)^{1+\sigma}\,\rd s\le\f1\om E(0)^\si E(t).
$$
Then the following decay estimates hold for $E(t)$.

$(1)$ If $\sigma=0,$ then $E(t)\leq E(0)\,\e^{1-\omega\phi(t)}$ for all $t\geq0$.

$(2)$ If $\sigma>0,$ then $E(t)\leq E(0)(\frac{1+\sigma}{1+\omega\sigma\phi(t)})^{\frac{1}{\sigma}}$ for all $t\geq0$.
\end{lemma}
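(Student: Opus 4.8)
The plan is to read Lemma \ref{lemma3.1} as an abstract Komornik--Martinez type integral inequality and to prove it in three stages: a reduction that removes the weight $\phi'$ and normalizes the energy, a scalar differential inequality for the tail integral, and a monotonicity-based recovery of the pointwise bound. First I would reduce to the model weight $\phi'\equiv1$. Since $\phi$ is a strictly increasing $C^1$ bijection of $[0,\infty)$ onto itself (by $\phi(0)=0$ and $\phi(t)\to\infty$), the substitution $\tau=\phi(s)$ converts $\int_t^\infty\phi'(s)E(s)^{1+\sigma}\,\rd s$ into $\int_{\phi(t)}^\infty\widetilde E(\tau)^{1+\sigma}\,\rd\tau$, where $\widetilde E(\tau):=E(\phi^{-1}(\tau))$ is still non-increasing with $\widetilde E(0)=E(0)$. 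Replacing $\widetilde E$ by $\widetilde E/E(0)$ (i.e. normalizing $E(0)=1$) turns the hypothesis into the autonomous inequality $\int_T^\infty\widetilde E^{1+\sigma}\,\rd\tau\le\frac1\omega\widetilde E(T)$ for all $T\ge0$, and it then suffices to prove $\widetilde E(T)\le(\frac{1+\sigma}{1+\omega\sigma T})^{1/\sigma}$ for $\sigma>0$ (resp. $\widetilde E(T)\le\e^{1-\omega T}$ for $\sigma=0$), since undoing the substitution and scaling restores the two asserted decay estimates with $T=\phi(t)$.

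Second, I would introduce the tail integral $f(T):=\int_T^\infty\widetilde E(\tau)^{1+\sigma}\,\rd\tau$, which is finite because $f(0)\le\frac1\omega\widetilde E(0)=\frac1\omega$, and which is absolutely continuous with $f'=-\widetilde E^{1+\sigma}$ a.e. The reduced hypothesis reads $\widetilde E(T)\ge\omega f(T)$, whence $-f'(T)=\widetilde E(T)^{1+\sigma}\ge\omega^{1+\sigma}f(T)^{1+\sigma}$. Integrating this scalar inequality gives, for $\sigma=0$, the bound $f(T)\le f(0)\e^{-\omega T}\le\frac1\omega\e^{-\omega T}$; for $\sigma>0$, integrating $\frac{\rd}{\rd T}f^{-\sigma}\ge\sigma\omega^{1+\sigma}$ and using $f(0)^{-\sigma}\ge\omega^\sigma$ yields $f(T)^{-\sigma}\ge\omega^\sigma(1+\omega\sigma T)$, i.e. $f(T)\le\frac1\omega(1+\omega\sigma T)^{-1/\sigma}$.

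Third comes the recovery of $\widetilde E$ from $f$. By monotonicity of $\widetilde E$, for every shift $L>0$ one has $L\,\widetilde E(T+L)^{1+\sigma}\le\int_T^{T+L}\widetilde E^{1+\sigma}\,\rd\tau\le f(T)$. Substituting the tail bounds and optimizing the free parameter $L$ is where the sharp prefactors emerge: writing the target time as $t=T+L$ and minimizing $\frac{1}{\omega L}(1+\omega\sigma(t-L))^{-1/\sigma}$ over $L\in(0,t]$, the optimizer $L_*=\frac{1+\omega\sigma t}{\omega(1+\sigma)}$ gives exactly $\widetilde E(t)^{1+\sigma}\le(\frac{1+\sigma}{1+\omega\sigma t})^{(1+\sigma)/\sigma}$, that is, the claimed bound with the constant $(1+\sigma)^{1/\sigma}$; the analogous choice $L=1/\omega$ in the case $\sigma=0$ produces the factor $\e^1$. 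Where $L_*$ is inadmissible (namely $t<1/\omega$, so that $\frac{1+\sigma}{1+\omega\sigma t}>1$), the asserted right-hand side already exceeds $1\ge\widetilde E(t)$, so the estimate holds trivially there and the bound is valid for all $t\ge0$.

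I expect the recovery step to be the main obstacle. The passage from integral decay of $f$ to a pointwise bound on $\widetilde E$ loses a factor in the exponent if one uses a fixed shift $L$; only by taking $L$ comparable to $T$ and performing the explicit optimization above does one recover both the correct power and the sharp constants $(1+\sigma)^{1/\sigma}$ and $\e$. A secondary technical point, easily dispatched, is justifying $f'=-\widetilde E^{1+\sigma}$ and the integration of the differential inequality when $E$ is merely non-increasing (hence differentiable only almost everywhere), which follows from the absolute continuity of $f$.
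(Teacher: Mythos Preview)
The paper does not give its own proof of this lemma; it is quoted verbatim from Martinez \cite{16} and used as a black box in the proof of Theorem~\ref{theorem3.1}. Your argument is correct and is essentially the classical Komornik--Martinez proof: the change of variables $\tau=\phi(s)$ reducing to unit weight, the differential inequality $-f'\ge\omega^{1+\sigma}f^{1+\sigma}$ for the tail $f$, and the monotonicity-plus-optimization step recovering the sharp prefactors $\e$ and $(1+\sigma)^{1/\sigma}$. There is nothing to compare against in the present paper, but what you wrote matches the original source and would be an acceptable self-contained justification.
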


We now proceed to the proof of Theorem \ref{theorem3.1} based on the above lemma.

Fixing some $T>0$, we multiply the governing equation in \eqref{1.1} by $\xi(t)E(t)^{q-1}u(t)$ and integrate over $\Omega\times(\tau,T)$ with some $\tau\in(0,T)$ to derive
\begin{align}
& \quad\,\int_\tau^T\xi(t)E(t)^{q-1}\int_{\Omega}u_{tt}(t)u(t)\,\rd x\rd t+\int_\tau^T\xi(t)E(t)^{q-1}\int_{\Omega}u_t(t)u(t)\,\rd x\rd t\nonumber\\
& \quad\,+\int_\tau^T\xi(t)E(t)^{q-1}\Vert\nabla u(t)\Vert_2^2\,\rd t-\int_\tau^T\xi(t)E(t)^{q-1}\int_{\Omega}\nabla u(t)\cdot\int_0^t g(t-s)\nabla u(s)\,\rd s\rd x\rd t\nonumber\\
& =\alpha\int_\tau^T\xi(t)E(t)^{q-1}\int_{\Omega}|u(t)|^p\log|u(t)|\,\rd x\rd t,\label{3.4}
\end{align}
where it is straightforward to see
\begin{align*}
& \quad\,\int_\tau^T\xi(t)E(t)^{q-1}\int_{\Omega}\nabla u(t)\cdot\int_0^t g(t-s)\nabla u(s)\,\rd s\rd x\rd t\\
& =\int_\tau^T\xi(t)E(t)^{q-1}\int_{\Omega}\int_0^t g(t-s)(\nabla u(s)-\nabla u(t))\cdot\nabla u(t)\,\rd s\rd x\rd t\\
& \quad\,+\int_\tau^T\xi(t)E(t)^{q-1}\int_0^t g(s)\,\rd s\Vert\nabla u(t)\Vert_2^2\,\rd t.
\end{align*}
Then we substitute the above identity into \eqref{3.4} and rearrange to obtain
\begin{align*}
& \quad\,\int_\tau^T\xi(t)E(t)^{q-1}\left(1-\int_0^t g(s)\,\rd s\right)\Vert\nabla u(t)\Vert_2^2\,\rd t\\
& =-\int_\tau^T\xi(t)E(t)^{q-1}\frac{\rd}{\rd t}\int_{\Omega}u_t(t)u(t)\,\rd x\rd t+\int_\tau^T\xi(t)E(t)^{q-1}\Vert u_t(t)\Vert_2^2\,\rd t\\
& \quad\,+\int_\tau^T\xi(t)E(t)^{q-1}\int_{\Omega}\int_0^t g(t-s)(\nabla u(s)-\nabla u(t))\cdot\nabla u(t)\,\rd s\rd x\rd t\\
& \quad\,-\int_\tau^T\xi(t)E(t)^{q-1}\int_{\Omega}u_t(t)u(t)\,\rd x\rd t
+\alpha\int_\tau^T\xi(t)E(t)^{q-1}\int_{\Omega}|u(t)|^p\log|u(t)|\,\rd x\rd t.
\end{align*}
Then we turn to the definition of $E(t)$ in \eqref{2.4} and employ the above identity to calculate
\begin{align}
2\int_\tau^T\xi(t)E(t)^q\,\rd t & =\int_\tau^T\xi(t)E(t)^{q-1}\left\{\|u_t(t)\|_2^2+\left(1-\int_0^t g(s)\,\rd s\right)\|\nb u(t)\|_2^2+(g\circ\nb u)(t)\right.\nonumber\\
& \qquad\qquad\qquad\qquad\;\;\;-\left.2\al\int_\Om\f{|u(t)|^p\log|u(t)|}p\,\rd x+2\al\int_\Om\f{|u(t)|^p}{p^2}\,\rd x\right\}\rd t\nonumber\\
& =\sum_{i=1}^7J_i(\tau),\label{3.7}
\end{align}
where
\begin{align*}
J_1(\tau) & :=2\int_\tau^T\xi(t)E(t)^{q-1}\|u_t(t)\|_2^2\,\rd t,\\
J_2(\tau) & :=-\int_\tau^T\xi(t)E(t)^{q-1}\int_{\Omega}u_t(t)u(t)\,\rd x\rd t,\\
J_3(\tau) & :=-\int_\tau^T\xi(t)E(t)^{q-1}\f\rd{\rd t}\int_\Om u_t(t)u(t)\,\rd x\rd t,\\
J_4(\tau) & :=\int_\tau^T\xi(t)E(t)^{q-1}(g\circ\nb u)(t)\,\rd t,\\
J_5(\tau) & :=\int_\tau^T\xi(t)E(t)^{q-1}\int_{\Omega}\int_0^t g(t-s)(\nabla u(s)-\nabla u(t))\cdot\nabla u(t)\,\rd s\rd x\rd t,\\
J_6(\tau) & :=\al\int_\tau^T\xi(t)E(t)^{q-1}\int_\Om\left(1-\f2p\right)|u(t)|^p\log|u(t)|\,\rd x\rd t,\\
J_7(\tau) & :=2\al\int_\tau^T\xi(t)E(t)^{q-1}\int_\Om\f{|u(t)|^p}{p^2}\,\rd x\rd t.
\end{align*}
In the sequel, we estimate each of the 7 terms above.\medskip


{\bf Step 1 } We first estimate $J_1(\tau)$ and  $J_2(\tau)$. Since \eqref{2.5} implies $\|u_t(t)\|_2^2\le-E'(t)$, it follows immediately from the monotonicity of $\xi(t)$ and $E(t)$ that
\begin{align}
J_1(\tau) & \leq-2\int_\tau^T\xi(t)E(t)^{q-1}E'(t)\,\rd t\leq2\xi(0)\left[\f{E(t)^q}q\right]_{t=T}^{t=\tau}\nonumber\\
& \le\f{2\xi(0)}q E(\tau)^q\le\f{2\xi(0)E(0)^{q-1}}q E(\tau).\label{3.12}
\end{align}
For $J_2(\tau)$, we apply Cauchy's inequality with $\de\in(0,1)$ and the above inequality to deduce
\begin{align*}
J_2(\tau) & \le\int_\tau^T\xi(t)E(t)^{q-1}\int_\Om\left(\f1{4\de}|u_t(t)|^2+\de|u(t)|^2\right)\rd x\rd t=\f1{8\de}J_1(\tau)+\de\int_\tau^T\xi(t)E(t)^{q-1}\|u(t)\|_2^2\,\rd t\\
& \le\f{\xi(0)E(0)^{q-1}}{4\de q}E(\tau)+\de B_2\int_\tau^T\xi(t)E(t)^{q-1}\|\nb u(t)\|_2^2\,\rd t,
\end{align*}
where again we used the Poincar\'e inequality. To deal with the last term above, we recall the definition \eqref{r4} of $\mathbb E(t)$ and apply Lemma \ref{lemma2.8} to bound
\begin{equation}\label{3.8}
\f\ell2\|\nb u(t)\|_2^2\le\f12\left(1-\int_0^t g(s)\,\rd s\right)\|\nb u(t)\|_2^2\le\mathbb E(t)\le(1+\wt C)E(t).
\end{equation}
Thus it reveals that
\begin{equation}\label{3.19}
J_2(t)\leq\frac{\xi(0)E(0)^{q-1}}{4\de q}E(\tau)+\frac{2\de B_2(1+\wt C)}\ell\int_\tau^T\xi(t)E(t)^q\,\rd t.
\end{equation}


{\bf Step 2 } Next, we deal with $J_3(\tau)$. Differentiating $\xi(t)E(t)^{q-1}\int_\Om u_t(t)u(t)\,\rd x$, we divide $J_3(\tau)=J_3^1(\tau)+J_3^2(\tau)+J_3^3(\tau)$ with
\begin{align*}
J_3^1(\tau) & :=-\int_\tau^T\f\rd{\rd t}\left(\xi(t)E(t)^{q-1}\int_\Om u_t(t)u(t)\,\rd x\right)\rd t,\\
J_3^2(\tau) & :=(q-1)\int_\tau^T\xi(t)E(t)^{q-2}E'(t)\int_\Om u_t(t)u(t)\,\rd x\rd t\quad(q>1),\\
J_3^3(\tau) & :=\int_\tau^T\xi'(t)E(t)^{q-1}\int_\Om u_t(t)u(t)\,\rd x\rd t.\\
\end{align*}
For $J_3^1(\tau)$, again we employ the monotonicity of $\xi(t),E(t)$ and the Poincar\'e inequality to deduce
\begin{align*}
J_3^1(\tau) & =\left[\xi(t)E(t)^{q-1}\int_\Om u_t(t)u(t)\,\rd x\right]_{t=T}^{t=\tau}\le\xi(0)E(0)^{q-1}\int_\Om(|u_t(\tau)u(\tau)|+|u_t(T)u(T)|)\,\rd x\\
& \le\f{\xi(0)E(0)^{q-1}}2\left(\|u_t(\tau)\|_2^2+\|u(\tau)\|_2^2+\|u_t(T)\|_2^2+\|u(T)\|_2^2\right)\rd x\\
& \le\f{\xi(0)E(0)^{q-1}}2\left\{\|u_t(\tau)\|_2^2+\|u_t(T)\|_2^2+B_2\left(\|\nb u(\tau)\|_2^2+\|\nb u(T)\|_2^2\right)\right\}.
\end{align*}
Similarly to the argument for \eqref{3.8}, we have
\[
\|u_t(t)\|_2^2\le2(1+\wt C)E(t),\quad\|\nb u(t)\|_2^2\le\f{2(1+\wt C)}\ell E(t),\quad\forall\,t\ge0,
\]
and hence
\begin{equation}\label{3.9}
J_3^1(\tau)\le2(1+\wt C)\left(1+\f{B_2}\ell\right)\xi(0)E(0)^{q-1} E(\tau).
\end{equation}
In an analogous manner, we estimate $J_3^2(t)$ as
\begin{align*}
J_3^2(\tau) & \le-(q-1)\int_\tau^T\xi(t)E(t)^{q-2}E'(t)\int_\Om|u_t(t)u(t)|\,\rd x\rd t\nonumber\\
& \le-\f{q-1}2\int_\tau^T\xi(t)E(t)^{q-2}E'(t)\left(\|u_t(t)\|_2^2+B_2\|\nb u(t)\|_2^2\right)\rd t\\
& \le-(q-1)(1+\wt C)\left(1+\f{B_2}\ell\right)\int_\tau^T\xi(t)E(t)^{q-1} E'(t)\,\rd t.
\end{align*}
Owing to the estimate of $\int_\tau^T\xi(t)E(t)^{q-1} E'(t)\,\rd t$ obtained in \eqref{3.12}, we immediately get
\begin{align}
J_3^2(\tau) & \le\f{(q-1)(1+\wt C)\xi(0)E(0)^{q-1}}q\left(1+\f{B_2}\ell\right)E(\tau)\nonumber\\
& \le(1+\wt C)\xi(0)E(0)^{q-1}\left(1+\f{B_2}\ell\right)E(\tau).\label{3.10}
\end{align}
For $J_3^3(\tau)$, we repeat the same argument and perform integration by parts to derive
\begin{align}
J_3^3(\tau) & \le\f12\int_\tau^T\xi'(t)E(t)^{q-1}\left(\|u_t(t)\|_2^2+B_2\|\nb u(t)\|_2^2\right)\rd t
\le-(1+\wt C)\left(1+\f{B_2}\ell\right)\int_\tau^T\xi'(t)E(t)^q\,\rd t\nonumber\\
& =(1+\wt C)\left(1+\f{B_2}\ell\right)\left\{\Big[\xi(t)E(t)^q\Big]_{t=T}^{t=\tau}+q\int_\tau^T\xi(t)E(t)^{q-1} E'(t)\,\rd t\right\}\nonumber\\
& \le(1+\wt C)\left(1+\f{B_2}\ell\right)\xi(0)E(0)^{q-1} E(\tau).\label{3.11}
\end{align}
Therefore, summing up \eqref{3.9}--\eqref{3.11} yields
\begin{equation}\label{eq-est-J3}
J_3(\tau)\le4(1+\wt C)\left(1+\f{B_2}\ell\right)\xi(0)E(0)^{q-1} E(\tau).
\end{equation}


{\bf Step 3 } To investigate $J_4(\tau)$ and $J_5(\tau)$, we shall divide the estimates into 2 cases, i.e., $q=1$ and $1<q<2$.\medskip

{\bf Case 1 } For $q=1$, assumption (A3) reduces to $g'(t)\le-\xi(t)g(t)$. Since \eqref{2.5} gives $(-g'\circ\nb u)(t)\le-2E'(t)$, the monotonicity of $\xi(t)$ indicates
\begin{align}
J_4(\tau) & \le\int_\tau^T\!\!\!\int_0^t\xi(t-s)g(t-s)\|\nb u(s)-\nb u(t)\|_2^2\,\rd s\rd t
\le\int_\tau^T(-g'\circ\nb u)(t)\,\rd t\nonumber\\
& \le-2\int_\tau^T E'(t)\,\rd t\le2E(\tau).\label{3.14}
\end{align}
As for $J_5(\tau)$, we apply Cauchy's inequality with $\ve>0$ and make use of \eqref{3.8} and \eqref{3.14} to derive
\begin{align}
J_5(\tau) & \le\f\ve2\int_\tau^T\xi(t)\int_0^t g(t-s)\|\nb u(t)\|_2^2\,\rd s\rd t
+\f1{2\ve}\int_\tau^T\!\!\!\int_0^t\xi(t-s)g(t-s)\|\nb u(s)-\nb u(t)\|_2^2\,\rd s\rd t\nonumber\\
& \le\f{\ve(1+\wt C)(1-\ell)}\ell\int_\tau^T\xi(t)E(t)\,\rd t+\f1\ve E(\tau)
\le\ve(1+\wt C)\int_\tau^T\xi(t)E(t)\,\rd t+\f1\ve E(\tau).\label{3.17}
\end{align}

{\bf Case 2 } Now let us consider the case of $1<q<2$. Applying H\"older's inequality with $q$ and $q^*=\f q{q-1}$, we employ \eqref{3.8} and \eqref{r3.1} to deduce
\begin{align*}
(g\circ\nb u)(t) & =\int_0^t\|\nb u(s)-\nb u(t)\|_2^{\f{2(q-1)}q}g(t-s)\|\nb u(s)-\nb u(t)\|_2^{\f2q}\,\rd s\\
& \le\left(\int_0^t\|\nb u(s)-\nb u(t)\|_2^2\,\rd s\right)^{1-\f1q}\left(\int_0^t g(t-s)^q\|\nb u(s)-\nb u(t)\|_2^2\,\rd s\right)^{\f1q}\\
& \le\left(2\int_0^t\left(\|\nb u(t)\|_2^2+\|\nb u(s)\|_2^2\right)\right)^{1-\f1q}(g^q\circ\nb u)(t)^{\f1q}\\
& \le C\left(\int_0^t(E(t)+E(s))\,\rd s\right)^{1-\f1q}(g^q\circ\nb u)(t)^{\f1q}
\le C E(0)^{1-\f1q}(g^q\circ\nb u)(t)^{\f1q}.
\end{align*}
Then we apply Young's inequality with $\ep>0$ to estimate $J_4(\tau)$ as
\begin{align*}
J_4(\tau) & \le C E(0)^{1-\f1q}\int_\tau^T\xi(t)E(t)^{q-1}(g^q\circ\nb u)(t)^{\f1q}\,\rd t\\
& =C E(0)^{1-\f1q}\int_\tau^T(\xi(t)E(t)^q)^{\f{q-1}q}\left(\xi(t)(g^q\circ\nb u)(t)\right)^{\f1q}\,\rd t\\
& \le C E(0)^{1-\f1q}\ep\int_\tau^T\xi(t)E(t)^q\,\rd t+C(\ep)\int_\tau^T\xi(t)(g^q\circ\nb u)(t)\,\rd t.
\end{align*}
For the last term above, we utilize assumption (A3) and the same argument for \eqref{3.14} to deduce
\begin{align*}
\int_\tau^T\xi(t)(g^q\circ\nb u)(t)\,\rd t &
\le\int_\tau^T\!\!\!\int_0^t\xi(t-s)g(t-s)^q\|\nb u(s)-\nb u(t)\|_2^2\,\rd s\rd t\\
& \le\int_\tau^T(-g'\circ\nb u)(t)\,\rd t\le-2\int_\tau^T E'(t)\,\rd t\le2E(\tau),
\end{align*}
and hence
\begin{equation}\label{3.16}
J_4(\tau)\le C\ep E(0)^{1-\f1q}\int_\tau^T\xi(t)E(t)^q\,\rd t+C(\ep)E(\tau).
\end{equation}
As for $J_5(\tau)$, we mimic the estimate in \eqref{3.17} and use \eqref{3.16} to derive
\begin{align}
J_5(\tau) & \le\f\ve2\int_\tau^T\xi(t)E(t)^{q-1}\int_0^t g(t-s)\|\nb u(t)\|_2^2\,\rd s\rd t+\f1{2\ve}\int_\tau^T\xi(t)E(t)^{q-1}(g\circ\nb u)(t)\,\rd t\nonumber\\
& \le\ve(1+\wt C)\int_\tau^T\xi(t)E(t)^q\,\rd t+\f1{2\ve}J_4(\tau)\nonumber\\
& \le\ve(1+\wt C)\int_\tau^T\xi(t)E(t)^q\,\rd t+\f1{2\ve}\left\{C\ep E(0)^{1-\f1q}\int_\tau^T\xi(t)E(t)^q\,\rd t+C(\ep)E(\tau)\right\}\nonumber\\
& \le\ve\left(1+\wt C+C E(0)^{1-\f1q}\right)\int_\tau^T\xi(t)E(t)^q+C(\ve)E(\tau),\label{3.18}
\end{align}
where we took $\ep=\ve^2$ in the last inequality.\medskip

{\bf Step 4 } Now it remains to deal with the two nonlinear terms $J_6(\tau)$ and $J_7(\tau)$. To start with, again we take advantage of the same argument in \eqref{eq-split}--\eqref{r3} to estimate
\begin{align*}
\int_\Om\left(1-\f2p\right)|u(t)|^p\log|u(t)|\,\rd x & \le\int_{\{|u(t)|\ge1\}}\left(1-\f2p\right)|u(t)|^p\log|u(t)|\,\rd x\\
& \le\left(1-\f2{p_2}\right)\int_{\{|u(t)|\ge1\}}|u(t)|^{p_2}\log|u(t)|\,\rd t\\
& \le\left(1-\f2{p_2}\right)\f1{\e\mu}\int_\Om|u(t)|^{p_2+\mu}\,\rd x
\le\f{(p_2-2)B_{p_2+\mu}}{\e\mu p_2}\|\nb u(t)\|_2^{p_2+\mu},
\end{align*}
where $\mu>0$ satisfies $p_2+\mu<2_*$. Then it follows from \eqref{3.8} that
\begin{align}
J_6(\tau) & \le\al\f{(p_2-2)B_{p_2+\mu}}{\e\mu p_2}\int_\tau^T\xi(t)E(t)^{q-1}\|\nb u(t)\|_2^{p_2+\mu}\,\rd x\nonumber\\
& \le\al\f{(p_2-2)B_{p_2+\mu}}{\e\mu p_2}\left(\f{2(1+\wt C)}\ell\right)^{\f{p_2+\mu}2}\int_\tau^T\xi(t)E(t)^{q+\f{p_2+\mu-2}2}\,\rd t
\le\al\xi_4\int_\tau^T\xi(t)E(t)^q\,\rd t,\label{3.21}
\end{align}
where
$$
\xi_4:=\f{(p_2-2)B_{p_2+\mu}}{\e\mu p_2}\left(\f{2(1+\wt C)}\ell\right)^{\f{p_2+\mu}2}E(0)^{\f{p_2+\mu-2}2}.
$$
Finally, for $J_7(\tau)$, we invoke the estimate \eqref{4.9} to deduce
\begin{align}
J_7(\tau) & \le\f{2\al}{p_1^2}\int_\tau^T\xi(t)E(t)^{q-1}\int_\Om|u(t)|^p\,\rd x\rd t
\le\f{2\al\xi_3}{p_1^2}\int_\tau^T\xi(t)E(t)^{q-1}\|\nb u(t)\|_2^2\,\rd t\nonumber\\
& \leq\frac{4\al(1+\wt C)\xi_3}{p_1^2\ell}\int_\tau^T\xi(t)E(t)^q\,\rd t.\label{3.22}
\end{align}

At this stage, we are in a position to complete the proof of Theorem \ref{theorem3.1}.\medskip

{\bf Case 1 } For $q=1$, we substitute \eqref{3.12}, \eqref{3.19}, \eqref{eq-est-J3}--\eqref{3.17}, \eqref{3.21} and \eqref{3.22} into \eqref{3.7} to dominate
\begin{align}
2\int_\tau^T\xi(t)E(t)\,\rd t
&\le\left\{2+\f1\ve+\left(2+4(1+\wt C)\left(1+\f{B_2}\ell\right)+\f1{4\de }\right)\xi(0) \right\}E(\tau)\nonumber\\
& \quad\,+\left\{(1+\wt C)\left(\ve+\f{2\de B_2}\ell\right)+\al\left(\xi_4+\f{4(1+\wt C)\xi_3}{p_1^2\ell}\right)\right\}\int_\tau^T\xi(t)E(t)\,\rd t.\label{3.23}
\end{align}
Therefore, if $\al>0$ is sufficiently small such that  
\begin{equation}\label{eq-alpha}
\al<\left(\xi_4+\f{4(1+\wt C)\xi_3}{p_1^2\ell}\right)^{-1},
\end{equation}
then we can suitably choose sufficiently small $\ve,\de>0$ such that 
\[
(1+\wt C)\left(\ve+\f{2\de B_2}\ell\right)+\al\left(\xi_4+\f{4(1+\wt C)\xi_3}{p_1^2\ell}\right)=1.
\]
Thus, we can rearrange \eqref{3.23} as
\[
\int_\tau^T\xi(t)E(t)\,\rd t\le\f1K E(\tau),\quad K:=\left\{2+\f1\ve+\left(2+4(1+\wt C)\left(1+\f{B_2}\ell\right)+\f1{4\de}\right)\xi(0)\right\}^{-1}.
\]
Since $T>0$ was chosen arbitrarily, we passing $T\to\infty$ to obtain
\[
\int_\tau^\infty\xi(t)E(t)\,\rd t\le\f1K E(\tau).
\]
Finally, setting $\phi(t)=\int_0^t\xi(s)\,\rd s$, $\si=0$ and $\om=K$ in Lemma \ref{lemma3.1}, we arrive at the first inequality in \eqref{3.2}.\medskip

{\bf Case 2 } For $1<q<2$, we follow the same line as above to substitute \eqref{3.12}, \eqref{3.19}, \eqref{eq-est-J3} and \eqref{3.16}--\eqref{3.22} into \eqref{3.7} to dominate
\begin{align}
2\int_\tau^T\xi(t)E(t)^q\,\rd t
& \le\left\{C(\ve)+\xi(0)E(0)^{q-1}\left(\f2q+\f1{4\de q}+4(1+\wt C)\left(1+\f{B_2}\ell\right)\right)\right\}E(\tau)\nonumber\\
& \quad\,+\left\{\f{2\de B_2(1+\wt C)}\ell+C\ve^2E(0)^{1-\f1q}+\ve\left(1+\wt C+C E(0)^{1-\f1q}\right)\right.\nonumber\\
& \qquad\;\;\,\,\left.+\al\left(\xi_4+\f{4(1+\wt C)\xi_3}{p_1^2\ell}\right)\right\}\int_\tau^T\xi(t)E(t)\,\rd t,\label{eq-case2}
\end{align}
where we set $\ep=\ve^2$ in \eqref{3.16}. Therefore, if $\al>0$ satisfies the same condition \eqref{eq-alpha} as above, we can still choose $\ve,\de>0$ suitably such that
\[
\f{2\de B_2(1+\wt C)}\ell+C\ve^2E(0)^{1-\f1q}+\ve\left(1+\wt C+C E(0)^{1-\f1q}\right)+\al\left(\xi_4+\f{4(1+\wt C)\xi_3}{p_1^2\ell}\right)=1.
\]
Then we can rearrange \eqref{eq-case2} as
\[
\int_\tau^T\xi(t)E(t)^q\,\rd t\le\f1{K'}E(0)^{q-1}E(\tau),
\]
where
\[
K':=\left\{C(\ve)+\xi(0)E(0)^{q-1}\left(\f2q+\f1{4\de q}+4(1+\wt C)\left(1+\f{B_2}\ell\right)\right)\right\}^{-1}.
\]
Thus, passing $T\to\infty$ yields
\[
\int_\tau^\infty\xi(t)E(t)^q\,\rd t\le\f1{K'}E(0)^{q-1}E(\tau).
\]
where
As a result, the application of Lemma \ref{lemma3.1} with $\phi(t)=\int_0^t\xi(s)\,\rd s$, $\si=q-1$ and $\om=K'$ implies the second inequality in \eqref{3.2}.

The proof of Theorem \ref{theorem3.1} is completed.


\section*{Acknowledgements}

The second author is supported by JSPS KAKENHI Grant Numbers JP22K13954, JP23KK0049 and Guangdong Basic and Applied Basic Research Foundation (No.\! 2025A1515012248).



\begin{thebibliography}{100}

\bibitem{10}
R.A. Adams, J.F. Fournier, Sobolev Spaces (2nd Ed.), Academic Press, New York, 2003.

\bibitem{25}
A.M. Al-Mahdi, M.M. Al-Gharabli, New general decay results in an infinite memory viscoelastic problem with nonlinear damping, Bound. Value Probl., 2019 (2019) 140.

\bibitem{20}
K. Bartkowski, P. G\'orka, One dimensional Klein--Gordon equation with logarithmic nonlinearities, J. Phys. A 41 (35) (2008) 355--201.

\bibitem{12}
F. Belhannache, M.M. Algharabli, S.A. Messaoud, Asymptotic stability for a viscoelastic equation with nonlinear damping and very general type of relaxation functions, J. Dyn. Control Syst., 26 (1) (2020) 45--67.

\bibitem{17}
I. Bialynicki-Birula, J. Mycielski, Wave equations with logarithmic nonlinearities, Bull. Acad. Pol. Sci. Cl 23 (4) (1975) 461--466.

\bibitem{18}
I. Bialynicki-Birula, J. Mycielski, Nonlinear wave mechanics, Ann. Physics 100 (12) (1976) 62--93

\bibitem{19}
T. Cazenave< A. Haraux, Equations d'\'evolution avec nonlinearity logarithmique, 2 (1) (1980) 21--51.

\bibitem{4}
H. Di, Y. Shang, Z. Song, Initial boundary value problem for a class of strongly damped semilinear wave equations with logarithmic nonlinearity, Nonlinear Anal. Real World Appl. 51 (2020) 102968.

\bibitem{8}
X. Fan, D. Zhao, On the spaces $L^{p(x)}(\Omega)$ and $W^{k,p(x)}(\Omega)$, J. Math. Anal. Appl. 263 (2) (2001) 424--446.

\bibitem{9}
X.-L. Fan, Q.-H. Zhang, Existence of solutions for $p(x)$-Laplacian Dirichlet problem, Nonlinear Anal. 52 (8) (2003) 1843--1852.

\bibitem{21}
P. G\'orka, Logarithmic Klein--Gordon equation, Acta Phys. Polon. B 40 (2009) 59--66.

\bibitem{1}
T.G. Ha, S.-H. Park, Blow-up phenomena for a viscoelastic wave equation with strong damping and logarithmic nonlinearity, Adv. Differential Equations 2020 (2020) 235.

\bibitem{3}
N. Irkil, E. Pi\c skin, P. Agarwal, Global existence and decay of solutions for a system of viscoelastic wave equations of Kirchhoff type with logarithmic nonlinearity, Math. Methods Appl. Sci. 45 (5) (2022) 2921--3948.

\bibitem{22}
K.-P. Jin, L. Jin, T.-J. Xiao, Stability of initial-boundary value problem for quasilinear viscoelastic equations, Electron. J. Differential Equations 2020 (80) (2020) 1--15.

\bibitem{7}
D. Lars, P. Harjulehto, P. H\"ast\"o, M. Ru\v zi\v cka, Lebesgue and Sobolev Spaces with Variable Exponents, Springer, Berlin 2011.

\bibitem{15}
F. Li, Q. Gao, Blow-up of solution for a nonlinear Petrovsky type equation with memory, Appl. Math. Comput. 274 (2016) 383--392.

\bibitem{23}
M. Liao, B. Guo, X. Zhu, Energy decay rates of solutions to a viscoelastic wave equation with variable exponents and weak damping, arXiv:2011.11185v1 (2020).

\bibitem{14}
M. Kafini, S. Messaoudi, Local existence and blow up of solutions to a logarithmic nonlinear wave equation with delay, Appl. Anal. 99 (3) (2020) 530--547.

\bibitem{16}
P. Martinez, A new method to obtain decay rate estimates for dissipative systems, ESAIM Control Optim. Calc. Var. 4 (1999) 419--444.

\bibitem{11}
M.I. Mustafa, Optimal decay rates for the viscoelastic wave equation, Math. Methods Appl. Sci. 41 (1) (2018) 192--204.

\bibitem{24}
S.A. Messaoudi, W. Al-Khulaifi, General and optimal decay for a quasilinear viscoelastic equation, Appl. Math. Lett. 66 (2017) 16--22.

\bibitem{2}
S.-H. Park, J.-R. Kang, Blow-up of solutions for a viscoelastic wave equation with variable exponents, Math. Methods Appl. Sci. 42 (6) (2019) 2083--2097.

\bibitem{q2}
Q. Peng, Y. Liu, Energy decay of viscoelastic equations with nonlinear damping and polynomial nonlinearity, Appl. Anal. 104 (18) (2025) 3497--3518.

\bibitem{PL26}
Q. Peng, Y. Liu, Exponential stability for an infinite memory wave equation with frictional damping and logarithmic nonlinear terms, Nonlinear Anal. Real World Appl. 88 (2026) 104470.

\bibitem{qing}
Q. Peng, Z. Zhang, Stabilization and blow-up in an infinite memory wave equation with logarithmic nonlinearity and acoustic boundary conditions, J. Syst. Sci. Complex. 37 (4) (2024) 1368--1391

\bibitem{qing2}
Q. Peng, Z. Zhang, Stabilization and blow-up for a class of weakly damped Kirchhoff plate equation with logarithmic nonlinearity, Indian J. Pure Appl. Math. 56 (2025) 711--727.

\bibitem{uk}
Y. Ueda, R. Duan, S. Kawashima, Decay structure for symmetric hyperbolic systems with non-symmetric relaxation and its applications, Arch. Rational Mech. Anal. 205 (2012) 239--266.

\end{thebibliography}
\end{document}